  \newtheorem*{con4}{Conjecture 1}
\newtheorem{con}{Conjecture}
\newtheorem{thm}{Theorem}
\newtheorem{prop}{Proposition}
\newtheorem{lemma}{Lemma}
\newtheorem*{con2}{Conjecture 2$'$}
\newtheorem*{con3}{Conjecture 3$'$}
\newtheorem*{ques}{Question}
\DeclareMathOperator{\sym}{Sym}
\DeclareMathOperator{\im}{im}
\DeclareMathOperator{\aut}{Aut}
\DeclareMathOperator{\id}{id}
\DeclareMathOperator{\gal}{Gal}
\DeclareMathOperator{\spn}{span}
\begin{document}

\title{Monoidal Networks}
\author{Ethan Robinett}
\date{}
\maketitle

\begin{abstract}
  \noindent In this paper we define and study the notion of a monoidal network, which consists of a commutative ring $R$ and a collection of groups $\Gamma_I$, indexed by the ideals of $R$, with $\Gamma_I$ acting on the quotient $R/I$ and satisfying a certain lifting condition. The examination of these objects is largely motivated by, and initially arose from, the study of the union-closed sets conjecture. This connection is made precise and other aspects of these structures are investigated. 
\end{abstract}

\noindent%
{\small\emph{Key Words and Phrases}: Union-closed sets conjecture, multiaction, monoidal network}

\section{Introduction}
\noindent In 1979, the following elementary problem was posed by Peter Frankl~\cite{Frankl}, known subsequently as the union-closed sets conjecture:
\begin{con4}\label{con1}
Let $S$ be a finite collection of finite sets with $|S| >1$. Suppose that the union of any two members of $S$ is also a member of $S$. Then there is some element that is contained in at least half of the sets of $S$. 
\end{con4}
\noindent Despite the incredibly simple nature of the given statement, this is a very difficult problem. To give the reader an idea of the current results in this direction, we give a brief account of the progress that has been made towards this conjecture (for a more comprehensive summary of this, the reader is referred to~\cite{Bruhn2}). The conjecture has been shown to hold when $|S| \leq 50$~\cite{Bruhn2} (this following from theorems contained in~\cite{Faro} and~\cite{zivkovic}), a bound which appears at the end of a line of results in this vein given by Sarvate and Renaud~\cite{Sarvate2, Sarvate}, Poonen~\cite{Poonen}, Faro~\cite{Faro}, Morris~\cite{Morris}, and Roberts~\cite{Roberts}. The claim is also known to be true if $m \doteq \Big| \displaystyle \bigcup_{A \in S} A \Big| \leq 12$~\cite{zivkovic}, and if $S$ contains either a singleton or a set with two elements~\cite{Bruhn2, Poonen, Sarvate2}. With regard to the latter result, it is also known that there is a major obstruction to generalizing the type of argument that leads to this conclusion, this is elaborated upon in~\cite{Bruhn2},~\cite{Poonen} and~\cite{Sarvate}. We also have results when $|S|$ is large in comparison to $m$, in particular, the claim is known to hold if $|S| \geq \lceil \frac{2}{3}2^m \rceil$~\cite{Balla}. This follows from the investigations of Balla, Bollob\'as and Eccles on the average size of member-sets of $S$ and represents an improvement of a previous result of this kind obtained by Nishimura and Takahashi~\cite{Nishimura}. All of the aforementioned statements place some additional hypothesis on $S$. Without any supplementary assumptions, as is noted in~\cite{Bruhn2}, it seems the strongest result available to us is due to Knill~\cite{Knill}, stating that there is always some element in at least $\frac{|S|-1}{\log_2 |S|}$ sets in $S$ (if $|S|$ is sufficiently large, this can be improved slightly to $\frac{2.4 |S|}{\log_2 |S|}$, see~\cite{wojcik}). For numerous other results, the reader is referred to each of the contributons cited above, as well as~\cite{Bruhn3, Zahar, Johnson, Norton, Roberts, Vaughan1, Vaughan2, Vaughan3}. 
\\
\indent \indent An aspect of this problem which at least partially motivates the content of this paper is perhaps best viewed by examining an alternative formulation of the conjecture. There is a lengthy list of lattice-theoretic statements given by Poonen in~\cite{Poonen}, each of which is equivalent to Conjecture~\ref{con1}. A good representative of the character of the items in this list is the sixth entry, which states that given a finite lattice $\mathcal{L}$ with $n= |\mathcal{L}| \geq 2$, there is some meet-irreducible $M \in \mathcal{L}$ such that at most $\frac{n}{2}$ elements $P \in \mathcal{L}$ satisfy $P \leq M$. This formulation of the problem has merit. It has allowed certain assumptions on the structure of $\mathcal{L}$ to be applied that would not have been obvious in the more set-theoretic setting, resulting in confirmation of the conjecture in additional special cases. In particular, the claim holds for distributive and geometric lattices~\cite{Poonen, Rival}, complemented lattices~\cite{Poonen}, modular lattices~\cite{Abe}, and more generally the so-called lower semimodular lattices~\cite{Reinhold}. 
\\
\indent \indent Other formulations can be found in~\cite{Bruhn1, Bruhn2, Zahar, Salzborn}. Like the above, each allows certain structural hypotheses to be brought to the problem, with additional results following. Moreover, each of these is similar to the lattice-theoretic version in the somewhat vague sense that the reliance is placed on the existence of a certain collection of objects, each of which is present in either at most or at least half of some other collection of objects, the specific type of object depending on the formulation. For instance, the graph-theoretic variant contained in~\cite{Bruhn2} states that any bipartite graph $G$ with at least one edge contains in each of its bipartition classes a vertex that lies in at most half of the maximal stable sets of $G$. Thus, here we are asking about the existence of vertices that appear in at most half of the maximal stable sets of a graph, and in the lattice-theoretic version we are asking about meet-irreducible elements of a lattice $\mathcal{L}$ appearing in the upper closures of at most half of the elements of $\mathcal{L}$. This general phenomenon is not surprising given that the original statement of the problem also fits this description, but the open status of Conjecture~\ref{con1} is evidence that conditions of this type are difficult to work with without additional conditions. 
\\
\indent \indent One purpose of what follows in this paper is to develop a formulation of Conjecture~\ref{con1} that does not rely upon the notion of some collection of distinguished objects appearing with a certain frequency. Specifically, we obtain an algebraic framework in which the conditions dictated by Conjecture~\ref{con1} can be studied in an abstract, systematic fashion. The inspiration for this stems from the simple observation that the conclusion of Conjecture~\ref{con1} does not hold if $S$ is not union-closed, suggesting that the conjecture may in fact be a deep structural statement disguised as a counting problem. Pursuing a formulation with the qualities mentioned above leads to the idea of a \emph{monoidal network}, as defined in Section~\ref{sec2} in terms of group actions on commutative rings. The relationship between these structures and Conjecture~\ref{con1} is given in detail in Section~\ref{sym}, where we also translate the supplementary conjectures found in~\cite{Poonen} into this language. While the precise definition of a monoidal network is rather technical and we therefore reserve it for the relevant section, it is worthy of note that in the context of Conjecture~\ref{con1}, a monoidal network can be viewed simply as a device which facilitates the use of mathematical induction. This aspect is intertwined with a certain generalization of the complement function on a power set called a \emph{pseudocomplement}, also defined in Section~\ref{sym}. Finally, the generality in which these objects are defined allows analogous questions to be asked in other algebraic situations. In fact, the various generalizations of Conjecture~\ref{con1} which become available to us are interesting in their own right. This is briefly explored in Section~\ref{auto}, which concludes with a concise discussion of existence-oriented questions.
\section{Definitions, conventions and notation} \label{sec2}
\noindent We begin by fixing some definitions and conventions.  All rings discussed henceforth are commutative and contain a multiplicative identity, and for a ring homomorphism $R \to R'$, we require that $1_R \mapsto 1_{R'}$. The notation $R^\times$ refers to the group of units in $R$, and $\aut (R)$ to the group of ring automorphisms of $R$. We write $R^{\aut (R)}$ for the subring of $R$ that is fixed pointwise by $\aut (R)$, henceforth referred to as the fixed subring of $R$. For nonempty $S \subseteq R$, the notation $\langle S \rangle$ refers to the ideal generated by $S$. If $S= \left\{ a \right\}$, we will write this simply as $(a)$. We emphasize that when we speak of a group $G$ acting on a ring $R$, we mean this in the purely set-theoretic sense, that is, we are viewing this as a homomorphism $G \to \sym (R)$ rather than (as is sometimes assumed) a homomorphism $G \to \aut (R)$. The ring structure of $R$ will be encoded in a different way. 
\\
\indent \indent We use the notation $(G, \gamma)$ to denote a group $G$ acting on a set $X$ with induced homomorphism $\gamma : G \to \sym (X)$. If $S \subseteq X$, we write $G^S$ for the set-wise stabilizer of $S$ under the action\footnote{We use this notation to avoid the subscript notation introduced in the widely-read book by Wielandt~\cite{Wielandt}, which means something else.}, evidently $G^S \leq G$. Given two group actions $(G, \gamma), (H, \phi)$ on $X$, we say the action of $H$ on $X$ \emph{extends} that of $G$ if $\im \gamma \subseteq \im \phi$. If $H$ is a subgroup of $G$, the notation $X/H$ refers to the orbit space of the action $(H, \gamma|_H)$. There is a natural projection $X \to X/H$, and we denote this map by $\theta$ (in virtually all situations in this paper, the subgroup $H$ associated with $\theta$ will be clear from context). 
\\
\indent \indent Now let $R$ be a ring. A \emph{multiaction} on $R$ is a collection $\Gamma (R) \doteq \left\{ (\Gamma_I, \gamma_I) \right\}_{I \trianglelefteq R}$, indexed by the ideals in $R$, with $(\Gamma_I, \gamma_I)$ an extension of the natural action of $\aut (R/I)$ on $R/I$. A generic multiaction on a ring $R$ will typically be denoted $\Gamma (R)$ as above, and we refer to the group $\Gamma_0$, corresponding to the zero ideal in $R$, as the \emph{top action group}. Following the convention established above, if $S \subseteq R/I$, the notation $\Gamma_I^S$ refers to the set-wise stabilizer of $S$ under the action of $\Gamma_I$ on $R/I$. One immediate example of a multiaction on $R$ is the \emph{automorphic} multiaction, given by setting $\Gamma_I \doteq \aut (R/I)$ for each ideal $I$ with the obvious action on $R/I$. This specific multiaction will be written as $\Gamma^{\aut} (R)$. Another example is the \emph{symmetric} multiaction, given by assigning $\Gamma_I \doteq \sym (R/I)$ with the natural set-theoretic action. We denote this by $\Gamma^{\sym} (R)$. One may also view each quotient $R/I$ as a module over $(R/I)^{\aut (R/I)}$, and then subsequently define $\Gamma_I$ to be the group of $(R/I)^{\aut (R/I)}$-module automorphisms of $R/I$. Each $\Gamma_I$ then extends the action of $\aut (R/I)$ in an obvious way, thus forming a multiaction on $R$. 
\\
\indent \indent We say a multiaction $\Gamma (R)$ is \emph{trivial} if there is a proper, nonzero ideal $I \subseteq R$ such that the action of $\Gamma_I$ on $R/I$ is trivial, and we say a ring $R$ is \emph{rigid} if $\aut (R)$ is the trivial group. We have the following elementary result:
\begin{prop} \label{prop:1} A ring $R$ admits a trivial multiaction if and only if $R/I$ is rigid for some proper, nonzero ideal $I$ of $R$.
 \begin{proof} If $R$ admits a trivial multiaction $\Gamma (R)$, then $R$ has a proper, nonzero ideal $I$ such that $(\Gamma_I, \gamma_I)$ is the trivial action on $R/I$. But $(\Gamma_I, \gamma_I)$ also extends the action of $\aut (R/I)$, so we have $\aut(R/I) \subseteq \im \gamma_I = \left\{ \id \right\}$, so $R/I$ is rigid. On the other hand, if $R/I$ is rigid for some proper, nonzero ideal $I$, then $\Gamma^{\aut} (R)$ is a trivial multiaction on $R$.
 \end{proof}
\end{prop}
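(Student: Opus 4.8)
The plan is to prove the biconditional by unwinding the two definitions involved—triviality of a multiaction and rigidity of a quotient—and observing that the extension condition built into the definition of a multiaction forces them to coincide. The central fact I would lean on is that every group $\Gamma_I$ appearing in a multiaction is, by definition, an extension of the natural action of $\aut(R/I)$ on $R/I$; that is, $\im \gamma_I \supseteq \im(\aut(R/I) \to \sym(R/I))$. This containment is what links the set-theoretic triviality of $\Gamma_I$ to the algebraic rigidity of $R/I$, and it is the hinge of both directions.

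For the forward direction, I would suppose $\Gamma(R)$ is a trivial multiaction, so that there is a proper, nonzero ideal $I$ with $(\Gamma_I, \gamma_I)$ acting trivially on $R/I$, i.e.\ $\im \gamma_I = \{\id\}$. Since $(\Gamma_I, \gamma_I)$ extends the action of $\aut(R/I)$, the image of $\aut(R/I)$ in $\sym(R/I)$ is contained in $\im \gamma_I = \{\id\}$, forcing every ring automorphism of $R/I$ to act as the identity. Because a ring automorphism is determined by its underlying map on the set $R/I$, this means $\aut(R/I)$ is trivial, so $R/I$ is rigid. The only subtlety here is confirming that the natural map $\aut(R/I) \to \sym(R/I)$ is injective, which holds trivially since an automorphism acting as the identity on the set is the identity automorphism.

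For the reverse direction, I would suppose $R/I$ is rigid for some proper, nonzero ideal $I$, and exhibit an explicit trivial multiaction—the most natural candidate being the automorphic multiaction $\Gamma^{\aut}(R)$, in which $\Gamma_I \doteq \aut(R/I)$ acts on $R/I$ in the obvious way. Rigidity of $R/I$ means $\aut(R/I) = \{\id\}$, so this action on $R/I$ is trivial, and since $I$ is proper and nonzero, this exhibits $\Gamma^{\aut}(R)$ as a trivial multiaction on $R$. This direction is essentially a matter of selecting the right witness and checking the definitions line up.

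Neither direction presents a genuine obstacle; the content of the statement is really the recognition that the definitional extension condition makes ``$\Gamma_I$ acts trivially'' and ``$\aut(R/I)$ is trivial'' equivalent. If anything, the only point requiring care is making sure the proper-and-nonzero qualifier on $I$ is carried through both directions consistently, since the definition of a trivial multiaction and the definition of rigidity of a quotient both quantify over exactly this class of ideals, so the same $I$ can be used on each side without loss.
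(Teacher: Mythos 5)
Your proposal is correct and follows essentially the same route as the paper: the forward direction uses the extension condition to force $\aut(R/I) \subseteq \im \gamma_I = \{\id\}$, and the backward direction exhibits $\Gamma^{\aut}(R)$ as the trivial multiaction. The only addition is your explicit remark on the injectivity of $\aut(R/I) \to \sym(R/I)$, which the paper leaves implicit.
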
 

\indent \indent Thus far, we have defined a rather large collection of objects. Indeed, the defining properties of a generic multiaction are broad enough to allow for situations where the actions on the quotients $(\Gamma_I, \gamma_I)$ are not related to each other in any particular way. We now begin the process of defining a specific type of multiaction in which some of these actions are related to the action of the top action group on $R$. First, an \emph{extended submonoid} of $R$ is a multiplicative submonoid $M \subseteq R$  with $R^{\aut (R)} \subseteq M$. Now fix an extended submonoid $M$, a multiaction $\Gamma (R)$ and an ideal $I \subseteq R$. Let $\pi : R \to R/I$ be the canonical projection. We say $I$ is a \emph{covering ideal} of $M$ with associated submonoid $\hat{M}$ if there is an extended submonoid $\hat{M}$, contained in $M$ and with $M \setminus \hat{M} \subseteq I$, satisfying the following condition: for any $g \in \Gamma_I^{\pi (\hat{M})}$, there is a $\tilde{g} \in \Gamma_0^{\hat{M}}$ making the following commute:
\begin{center}
\begin{tikzcd}
\hat{M} \arrow[d, "\pi"] \arrow[rr, "\tilde{g}"] &  & \hat{M} \arrow[d, "\pi"] \\
\pi(\hat{M}) \arrow[rr, "g"] &  & \pi(\hat{M})
\end{tikzcd}
\end{center}
where, in the diagram above, by $g$ and $\tilde{g}$ we mean the bijection induced by these group elements, respectively. We say that $\tilde{g}$ is a \emph{lift} across $I$ of $g$. Finally, a \emph{monoidal network} on $R$ is a multiaction for which every extended submonoid has a proper, nonzero covering ideal. 
\\
\indent \indent A trivial multiaction on $R$ is always a monoidal network. In this case we have a proper, nonzero ideal $I$ such that $\Gamma_I$ acts trivially on $R/I$, thus $I$ is a covering ideal of any extended submonoid $M$ in $R$ and we may take $\hat{M} =M$, since the only bijection to be lifted across $I$ is the identity. On the other hand, no multiaction defined over a field is a monoidal network, as fields have no proper, nonzero ideals (the converse does not hold, that is, there are rings $R$ which are not fields but admit no monoidal network, see the conclusion of Section~\ref{auto}). Beyond these examples, it is difficult to discern whether a general multiaction on $R$ happens to be a monoidal network, though we give more examples at the conclusion of the next section in the case of $\Gamma^{\sym} (R)$ and most of Section~\ref{auto} is devoted to providing nontrivial examples in the case of $\Gamma^{\aut} (R)$. In fact, as will be shown in the next section, the assertion that $\Gamma^{\sym} (\mathbb{F}_2^n)$ is a monoidal network for every $n>1$ is already equivalent to the union-closed sets conjecture, so we should not expect easy answers to these questions, even when the structure of $R$ is well understood.

\section{The symmetric multiaction on $\mathbb{F}_2^n$} \label{sym}
\noindent This section is devoted to proving the following theorem:
\begin{thm} \label{thm:1}
The union-closed sets conjecture holds if and only if $\Gamma^{\sym} (\mathbb{F}_2^n)$ is a monoidal network for every $n>1$. 
\end{thm}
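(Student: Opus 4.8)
The plan is to translate everything about $\Gamma^{\sym}(\mathbb{F}_2^n)$ into the combinatorics of subset lattices and then reduce the lifting condition to a purely numerical one. Identifying an element of $\mathbb{F}_2^n$ with its support, a subset of $\{1,\dots,n\}$, the ring multiplication becomes intersection, the identity is $\{1,\dots,n\}$, and the zero is $\emptyset$. Since $\aut(\mathbb{F}_2^n)$ is the symmetric group permuting coordinates, the fixed subring is $\{\emptyset,\{1,\dots,n\}\}$, so an extended submonoid $M$ is exactly an intersection-closed family containing both $\emptyset$ and $\{1,\dots,n\}$; the ideals are the sets $I_A=\{S:S\subseteq A\}$, with $\mathbb{F}_2^n/I_A\cong\mathcal{P}(A^c)$ and $\pi(S)=S\cap A^c$, and $I_A$ is proper and nonzero precisely when $A$ is a proper nonempty subset. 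The first key step is to unwind the lifting condition: because every $\Gamma_I$ is a full symmetric group, $\Gamma_I^{\pi(\hat M)}$ contains every transposition of two points of $\pi(\hat M)$, and a lift $\tilde g\in\Gamma_0^{\hat M}$ is free to act arbitrarily inside the fibers of $\pi|_{\hat M}$. I would show that a lift exists for every $g\in\Gamma_I^{\pi(\hat M)}$ if and only if all fibers of $\pi|_{\hat M}$ over $\pi(\hat M)$ have equal cardinality. Thus $I_A$ is a covering ideal of $M$ with associated $\hat M$ exactly when $\hat M$ is an extended submonoid with $M\setminus\hat M\subseteq I_A$ on which $S\mapsto S\cap A^c$ has constant fiber size.

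For the direction ``conjecture $\Rightarrow$ monoidal network,'' I would use the dual normalized form of Conjecture~\ref{con1}: every intersection-closed family $M$ with $\emptyset,\{1,\dots,n\}\in M$ has an element $i$ in at most half its members. Taking $A=\{1,\dots,n\}\setminus\{i\}$, the quotient has two points, and the unique nonempty fiber, over $\{i\}$, is $\{S\in M:i\in S\}$, of size $a$; all such sets automatically lie in any admissible $\hat M$. It remains to choose $\hat M$ so that the fiber over $\emptyset$, namely $\hat M\cap\{S:i\notin S\}$, also has size $a$, while keeping $\hat M$ an extended submonoid. Since $i$ lies in at most half the members, there are at least $a$ sets avoiding $i$, and I would build the required subfamily by starting from $\{\emptyset\}$ and repeatedly adjoining a $\subseteq$-minimal not-yet-chosen set avoiding $i$; a short check shows each adjunction preserves closure under intersection and absorption by the sets containing $i$, so every size up to the number of sets avoiding $i$ is realizable, in particular $a$.

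For ``monoidal network $\Rightarrow$ conjecture,'' I would argue by induction on $n$. Given an intersection-closed $M$, the network hypothesis furnishes a proper nonzero covering ideal $I_A$ with associated $\hat M$ of constant fiber size $k$. The image $\pi(\hat M)$ is an intersection-closed family on $A^c$ containing $\emptyset$ and $A^c$ with $|A^c|<n$, so by induction some $i\in A^c$ lies in at most half of $\pi(\hat M)$. A fiber count transfers this: every set of $M$ containing $i$ lies in $\hat M$ and maps into the part of $\pi(\hat M)$ where $i$ is present, contributing $k\cdot|\{y\in\pi(\hat M):i\in y\}|$ sets, while the sets avoiding $i$ number at least $k\cdot|\{y\in\pi(\hat M):i\notin y\}|$; since $i$ is in at most half of $\pi(\hat M)$, it is in at most half of $M$.

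The step I expect to be the main obstacle is the reverse-direction construction of $\hat M$ of exactly the prescribed size: the lifting condition forces an \emph{exact} balance of fiber cardinalities rather than an inequality, so I must produce a sub-semilattice of the sets avoiding $i$ of precisely the size dictated by the sets containing $i$, all the while respecting closure under intersection and absorption by the remainder of $M$. The minimal-element growth argument is exactly what makes the prescribed size attainable. A final bookkeeping step reconciles the normalized statement with the literal form of Conjecture~\ref{con1}: this is the standard passage between a union-closed family and the intersection-closed family of complements, together with the harmless adjunction of $\emptyset$ and of the top element, which only improves the relevant ratios.
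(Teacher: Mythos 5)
Your proposal is correct, and while the forward direction essentially coincides with the paper's (Lemma~\ref{lemma:1} together with Proposition~\ref{prop:2}), your backward direction takes a genuinely different and more economical route. The paper never isolates your opening observation --- that because every $\Gamma_I$ is a full symmetric group, $I$ covers $M$ via $\hat M$ exactly when all fibers of $\pi|_{\hat M}$ over $\pi(\hat M)$ are equinumerous --- although it uses each half of that equivalence implicitly (the ``if'' half in Proposition~\ref{prop:2}, the ``only if'' half via the transitivity argument in Proposition~\ref{prop:5}); stating it once up front is a clean unification. In the forward direction you balance the fibers by growing the $i$-avoiding part upward from $\{\emptyset\}$ by adjoining $\subseteq$-minimal sets, whereas the paper shrinks $S$ downward by deleting basis sets not containing $p$; these are mirror images and both preserve intersection-closure for the same reason. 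The real divergence is in the backward direction: the paper routes the argument through \emph{pseudocomplements}, first proving (Proposition~\ref{prop:3}) that Conjecture~1$'$ is equivalent to every extended submonoid admitting an involution with proper quadratic ideal, and then (Proposition~\ref{prop:4}) inductively lifting such involutions through covering ideals. You instead induct directly on $n$: the covering ideal gives a quotient family $\pi(\hat M)$ on a strictly smaller ground set, the inductive hypothesis supplies $i \in A^c$ lying in at most half of $\pi(\hat M)$, and the constant fiber size $k$ transfers the bound back since $|M_i| = |\hat M_i| = k\,|\{y \in \pi(\hat M) : i \in y\}| \leq \tfrac{1}{2}k\,|\pi(\hat M)| = \tfrac{1}{2}|\hat M| \leq \tfrac{1}{2}|M|$. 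Your version is shorter and stays entirely inside elementary counting; what it gives up is the pseudocomplement reformulation itself, which the paper presents as an independently interesting restatement of the conjecture (an involution-theoretic criterion with no mention of $|S_p|$) rather than merely as scaffolding for Theorem~\ref{thm:1}. The one step you flag as delicate --- realizing the \emph{exact} prescribed fiber size --- is handled correctly by your minimal-growth argument, and the final normalization to families containing $\emptyset$ and $X_S$ is the same reduction to Poonen that the paper also invokes without proof.
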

\noindent We work with the intersection-closed version of the conjecture, which states:
\begin{con}\label{con:1}
If $S \subseteq \mathcal{P}(X)$ is an intersection-closed set with $|S|>1$, then there is some $p \in X$ that is an element of at most half of the sets in $S$.
\end{con}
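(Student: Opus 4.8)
The statement above is the intersection-closed reformulation of Frankl's conjecture, and as an unconditional assertion it is logically equivalent to the open Conjecture~\ref{con1}; there is consequently no prospect of settling it outright here. What can and should be established at this point is exactly that equivalence, so that the intersection-closed form may be substituted freely for the union-closed form in the proof of Theorem~\ref{thm:1}. The plan is to pass between the two formulations through the complementation involution on a fixed power set, tracking simultaneously the closure type of a family and the membership frequency of each individual point.

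Concretely, I would fix a finite set $X$ and consider the involution $c \colon \mathcal{P}(X) \to \mathcal{P}(X)$ given by $c(A) = X \setminus A$. By De Morgan's laws this map interchanges union-closedness and intersection-closedness: if $A \cup B \in S$ then $c(A) \cap c(B) = c(A \cup B) \in c(S)$, and symmetrically, so $S$ is union-closed if and only if $c(S)$ is intersection-closed. As $c$ is a bijection we have $|c(S)| = |S|$, so the hypothesis $|S| > 1$ transfers. The crucial bookkeeping is the membership identity $p \in A \iff p \notin c(A)$: for a fixed point $p \in X$, the number of members of $c(S)$ containing $p$ equals $|S|$ minus the number of members of $S$ containing $p$. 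Hence $p$ lies in at least half of the members of $S$ precisely when it lies in at most half of the members of $c(S)$.

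With this dictionary the equivalence is immediate in both directions. Given an intersection-closed $S \subseteq \mathcal{P}(X)$ with $|S| > 1$, the family $c(S)$ is union-closed with $|c(S)| > 1$; Conjecture~\ref{con1} supplies a point in at least half of $c(S)$, and the membership identity converts this into a point in at most half of $S$. Writing an arbitrary union-closed family as $c$ of the intersection-closed family $c(S)$ gives the reverse implication. The one place demanding care is the treatment of the ambient ground set and of degenerate points: the union-closed formulation tacitly takes the ground set to be $\bigcup_{A \in S} A$, whereas here $X$ is prescribed, and a point of $X$ lying in no member (respectively in every member) of $S$ satisfies the ``at most half'' conclusion (respectively is excluded from the union-closed ground set) trivially. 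I would therefore phrase both conjectures relative to a common ground set and verify that such boundary points never obstruct the correspondence --- a routine check, but the only genuine subtlety, since the combinatorial content of the statement itself remains beyond reach and any real progress must instead come from the monoidal-network machinery that Theorem~\ref{thm:1} brings to bear.
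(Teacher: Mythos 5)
Your proposal is correct and takes exactly the paper's route: the paper treats this statement not as something to be proven outright but as the complementation-dual of Conjecture 1, dispatching the equivalence with the single remark that it ``is obtained immediately from the union-closed version by taking complements.'' Your writeup simply spells out that same De Morgan/complementation argument (including the harmless ground-set bookkeeping) in full detail.
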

\noindent This is obtained immediately from the union-closed version by taking complements. By a result contained in~\cite{Poonen}, it is sufficient to consider only intersection-closed $S$ with $\emptyset, X_S \in S$, where $X_S \doteq \displaystyle \bigcup_{A \in S} A$. Following Poonen's notation in~\cite{Poonen}, for any $p \in X_S$, we fix the notation $S_p \doteq \left\{ A \in S : p \in A \right\}$. We recall from~\cite{Bruhn2} that a \emph{basis set} of an intersection-closed set $S$ is some $A \in S$ such that if $A= B \cap C$ for any $B,C \in S$, then either $B=A$ or $C=A$. We begin with the following easy lemma:
\begin{lemma} \label{lemma:1}
Let $S$ be intersection-closed with $|S| >1$ and $\emptyset, X_S \in S$. Then $p \in X_S$ has $|S_p| \leq \frac{|S|}{2}$ if and only if there is an intersection-closed set $\hat{S} \subseteq S$ with $\emptyset, X_S \in \hat{S}$ such that $|\hat{S}_p| = \frac{|\hat{S}|}{2}$ and for any $A \in S \setminus \hat{S}$, $p \notin A$. 
\begin{proof}
The backwards direction is immediate, if such an $\hat{S}$ exists, then we have $|S_p|=|\hat{S}_p|= \frac{|\hat{S}|}{2} \leq \frac{|S|}{2}$. We argue for the forward direction by induction on $|S| \geq 2$. If $|S|=2$, then we must have $S= \left\{ \emptyset,  X_S \right\}$. In this case, if $|S_p| \leq \frac{|S|}{2}$, then in fact $|S_p|=\frac{|S|}{2}$, thus we may take $\hat{S}=S$ and we are done. Now suppose the claim holds whenever $|\tilde{S}|=n$ for some $n \geq 2$ and let $|S|=n+1$. Suppose $p \in X_S$ has $|S_p| \leq \frac{|S|}{2}$. If $|S_p|= \frac{|S|}{2}$ then we are done, we may take $\hat{S}=S$ in this case, so assume $|S_p| <\frac{|S|}{2}$. Then $|S_p| \leq \frac{|S|-1}{2}$. Now since there is at least one $B \in S$ with $p \notin B$ and $|S|=n+1 \geq 3$, we may choose some basis set $A \in S$ with $p \notin A$, so that $\tilde{S}= S \setminus \left\{A \right\}$ is intersection-closed with $|\tilde{S}|=|S|-1=n$ and $\emptyset, X_S \in \tilde{S}$. Then note that $|\tilde{S}_p|=|S_p| \leq \frac{|S|-1}{2}= \frac{|\tilde{S}|}{2}$. By the inductive hypothesis, there is an intersection-closed $\hat{S} \subseteq \tilde{S} \subseteq S$ such that $|\hat{S}_p|= \frac{|\hat{S}|}{2}$ and for all $B \in \tilde{S}\setminus \hat{S}$, $p \notin B$. But $S \setminus \hat{S}= (\tilde{S} \setminus \hat{S}) \cup \left\{ A \right\}$, so since $p \notin A$, $\hat{S}$ is the desired intersection-closed set. This proves the claim.
\end{proof}
\end{lemma}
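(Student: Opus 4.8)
The plan is to dispose of the backward implication by a one-line observation and to establish the forward implication by induction on $|S|$, using basis sets as the device that lets one shrink $S$ without destroying intersection-closedness.

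For the backward direction, suppose such an $\hat{S}$ exists. The hypothesis that $p \notin A$ for every $A \in S \setminus \hat{S}$ says precisely that every member of $S$ containing $p$ already lies in $\hat{S}$, i.e. $S_p = \hat{S}_p$. Hence $|S_p| = |\hat{S}_p| = \frac{|\hat{S}|}{2} \leq \frac{|S|}{2}$, the last inequality because $\hat{S} \subseteq S$. This requires no induction.

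For the forward direction I would induct on $|S| \geq 2$. The base case $|S| = 2$ forces $S = \left\{ \emptyset, X_S \right\}$, where $|S_p| = 1 = \frac{|S|}{2}$, so $\hat{S} = S$ works. In the inductive step, if $|S_p| = \frac{|S|}{2}$ already, then again $\hat{S} = S$ does the job, so the only real case is $|S_p| < \frac{|S|}{2}$, which forces $|S_p| \leq \frac{|S|-1}{2}$. The idea is to delete from $S$ a single carefully chosen set $A$ with $p \notin A$, producing $\tilde{S} = S \setminus \left\{ A \right\}$ of size $|S|-1$; since $p \notin A$ we have $\tilde{S}_p = S_p$, so $|\tilde{S}_p| = |S_p| \leq \frac{|S|-1}{2} = \frac{|\tilde{S}|}{2}$, and the inductive hypothesis applied to $\tilde{S}$ (noting $X_{\tilde{S}} = X_S$, since $X_S \in \tilde{S}$ and every member of $S$ is contained in $X_S$) yields $\hat{S}$. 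Because $S \setminus \hat{S} = (\tilde{S} \setminus \hat{S}) \cup \left\{ A \right\}$ and $p$ lies in neither the sets of $\tilde{S} \setminus \hat{S}$ nor in $A$, this $\hat{S}$ is exactly what we need.

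The crux is choosing $A$ so that all the bookkeeping goes through: $\tilde{S}$ must remain intersection-closed and must still contain both $\emptyset$ and $X_S$. This is where the notion of a basis set enters. First, deleting any basis set keeps the family intersection-closed, since if $B \cap C$ were to equal a deleted basis set $A$ with $B \neq A$ and $C \neq A$, that would contradict the defining property of a basis set. Second, I claim that in the relevant case a basis set $A$ with $p \notin A$ always exists, obtained by taking a maximal element (under inclusion) of $T = \left\{ A \in S : p \notin A \right\}$: maximality forces such an $A$ to be intersection-irreducible, because if $A = B \cap C$ with $B \supsetneq A$ and $C \supsetneq A$ then $p$ must miss at least one of $B, C$, putting a strictly larger $p$-free set into $T$ and contradicting maximality. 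Finally one must check $A \neq \emptyset$ and $A \neq X_S$ so that $\emptyset$ and $X_S$ survive in $\tilde{S}$: the relation $p \in X_S$ gives $A \neq X_S$ at once, while the assumption $|S_p| < \frac{|S|}{2}$ together with $|S| \geq 3$ forces $|T| = |S| - |S_p| \geq 2$, so $T$ contains a nonempty set and its maximal element therefore cannot be $\emptyset$. I expect this verification---that a maximal $p$-free set is a basis set distinct from $\emptyset$ and $X_S$---to be the main obstacle, since it is the only step requiring a genuine structural argument rather than arithmetic manipulation.
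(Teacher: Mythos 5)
Your proposal is correct and follows essentially the same argument as the paper: the backward direction by the identity $S_p = \hat{S}_p$, and the forward direction by induction on $|S|$, removing a basis set $A$ with $p \notin A$ to pass to $\tilde{S} = S \setminus \{A\}$. The only difference is that you explicitly justify the existence of such a basis set (as a maximal $p$-free member of $S$) and verify $A \neq \emptyset, X_S$, details the paper leaves implicit; your verification is sound.
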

\indent \indent Lemma~\ref{lemma:1} is all that is needed to prove the forward direction of Theorem~\ref{thm:1}. We will actually prove something slightly stronger.
\begin{prop} \label{prop:2}
Suppose Conjecture~\ref{con:1} holds. Then for $n \geq 1$, any extended submonoid $M \subseteq \mathbb{F}_2^n$ has a maximal covering ideal under $\Gamma^{\sym} (\mathbb{F}_2^n)$. 
\begin{proof}
Fix $n \geq 1$ and choose some set $X$ with $|X|=n$. Endow $\mathcal{P}(X)$ with a ring structure by declaring $A+B \doteq A \Delta B$ and $A \cdot B \doteq A \cap B$. There is then an isomorphism $\Phi: \mathcal{P}(X) \xrightarrow{\cong} \mathbb{F}_2^n$ given by $\Phi: A \mapsto \chi_A$, where $\chi_A : X \to \mathbb{F}_2$ is the characteristic function of $A$. 
\\
\indent \indent Let $M \subseteq \mathbb{F}_2^n$ be an extended submonoid. The fixed subring of $\mathbb{F}_2^n$ is isomorphic to $\mathbb{F}_2$, thus $0,1 \in M$ and $\Phi^{-1}(M) \subseteq \mathcal{P}(X)$ is an intersection-closed set containing $\emptyset, X$. By Conjecture~\ref{con:1}, there is some $p \in X_{\Phi^{-1}(M)} \subseteq X$ with $|\Phi^{-1}(M)_p| \leq \frac{|\Phi^{-1}(M)|}{2}$, and Lemma~\ref{lemma:1} then gives an intersection-closed $\hat{S} \subseteq \Phi^{-1}(M)$ with $|\hat{S}_p|= \frac{|\hat{S}|}{2}$ and $p \notin A$ for all $A \in \Phi^{-1}(M)\setminus \hat{S}$. Set $\hat{M}= \Phi (\hat{S}) \subseteq M$ and $I= ( \Phi(X \setminus \left\{p\right\}))$. It is plain that $I$ is a maximal ideal of $\mathbb{F}_2^n$ with residue field $\mathbb{F}_2^n/I \cong \mathbb{F}_2$, we argue that under $\Gamma^{\sym} (\mathbb{F}_2^n)$,  $I$ is a covering ideal of $M$ with associated submonoid $\hat{M}$. 
\\
\indent \indent It is immediate that $\hat{M}$ is an extended submonoid in $\mathbb{F}_2^n$, since $\emptyset, X \in \hat{S}$. If we have $x \in M \setminus \hat{M}$, then $x= \chi_A$ for some $A \in \Phi^{-1}(M) \setminus \hat{S}$. Then $p \notin A$, hence $x \in I$, so that $M \setminus \hat{M} \subseteq I$. Now the restriction of the canonical projection $\pi: \mathbb{F}_2^n \to \mathbb{F}_2^n/I$ to $\hat{M}$ remains surjective, so we have $(\Gamma^{\sym}_I)^{\pi(\hat{M})}= \sym (\mathbb{F}_2)$. We need only guarantee that both $\id_{\mathbb{F}_2}$ and the transposition $(01)$ have lifts across $I$ to the stabilizer of $\hat{M}$ in the top action group $\Gamma_0^{\sym}= \sym (\mathbb{F}_2^n)$. For $\id_{\mathbb{F}_2}$ we may simply take $\id_{\mathbb{F}_2^n} \in (\Gamma_0^{\sym})^{\hat{M}}$, so consider $\phi= (01)$. Since $|\hat{S}_p|= \frac{|\hat{S}|}{2}$, we have $|I \cap \hat{M}|= \frac{|\hat{M}|}{2}$. We may then choose a bijection $\gamma: I \cap \hat{M} \to I^c \cap \hat{M}$, and we define $\tilde{\phi}: \mathbb{F}_2^n \to \mathbb{F}_2^n$ to be:
\begin{align*}
\tilde{\phi}: x \mapsto \begin{cases} 
      x & x \notin \hat{M} \\
      \gamma (x) & x \in I \cap \hat{M} \\
      \gamma^{-1}(x) & x \in I^c \cap \hat{M}. 
   \end{cases}
\end{align*}
Then $\tilde{\phi} \in (\Gamma_0^{\sym})^{\hat{M}}$ and we have $\phi \pi= \pi \tilde{\phi}$ on $\hat{M}$, as desired. The claim follows.
\end{proof}
\end{prop}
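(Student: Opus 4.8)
The plan is to exploit the standard identification of $\mathbb{F}_2^n$ with the power set ring $\mathcal{P}(X)$ on an $n$-element set $X$, where addition is symmetric difference and multiplication is intersection, realized by the isomorphism $\Phi : \mathcal{P}(X) \to \mathbb{F}_2^n$, $A \mapsto \chi_A$. Under this identification the ring identity is $X$, the zero is $\emptyset$, and the fixed subring is $\{\emptyset, X\}$, so an extended submonoid $M$ pulls back to an intersection-closed family $\Phi^{-1}(M) \subseteq \mathcal{P}(X)$ containing both $\emptyset$ and $X$. This places us squarely in the hypothesis of Conjecture~\ref{con:1}.

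First I would apply Conjecture~\ref{con:1} to $\Phi^{-1}(M)$ to obtain an element $p \in X$ lying in at most half of the sets of $\Phi^{-1}(M)$. The natural candidate for the covering ideal is the maximal ideal $I = (\Phi(X \setminus \{p\}))$, consisting of exactly those $\chi_A$ with $p \notin A$; since $\mathbb{F}_2^n / I \cong \mathbb{F}_2$, this is where the word ``maximal'' in the statement will come from. The difficulty is that Conjecture~\ref{con:1} supplies only the inequality $|\Phi^{-1}(M)_p| \leq |\Phi^{-1}(M)|/2$, whereas the lifting condition for a covering ideal will demand an exact balance. This is precisely the role of Lemma~\ref{lemma:1}: it trims $\Phi^{-1}(M)$ down to an intersection-closed $\hat{S}$ with $|\hat{S}_p| = |\hat{S}|/2$ and with $p \notin A$ for every $A \in \Phi^{-1}(M) \setminus \hat{S}$. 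Setting $\hat{M} = \Phi(\hat{S})$ produces the associated submonoid, and the second property of Lemma~\ref{lemma:1} immediately yields $M \setminus \hat{M} \subseteq I$.

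It then remains to verify the lifting condition, which is the crux. Because $\pi$ restricted to $\hat{M}$ is still surjective onto $\mathbb{F}_2^n / I \cong \mathbb{F}_2$, the relevant stabilizer $(\Gamma_I^{\sym})^{\pi(\hat{M})}$ is all of $\sym(\mathbb{F}_2) = \{\id, (01)\}$, so I only need to lift two elements across $I$ to permutations of $\mathbb{F}_2^n$ stabilizing $\hat{M}$. The identity lifts trivially via $\id_{\mathbb{F}_2^n}$. The substantive case is the transposition $(01)$, which must be realized by a permutation interchanging the two fibers $I \cap \hat{M}$ and $I^c \cap \hat{M}$ of $\pi|_{\hat{M}}$ while fixing $\mathbb{F}_2^n \setminus \hat{M}$ pointwise. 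Such a permutation exists exactly because the two fibers have equal cardinality---which is the content of $|\hat{S}_p| = |\hat{S}|/2$; one picks any bijection $\gamma$ between them and defines the lift to act by $\gamma$ on one fiber, by $\gamma^{-1}$ on the other, and as the identity elsewhere.

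The main obstacle, and the reason the full strength of $\Gamma^{\sym}$ is used, is this last lifting step: since the top action group is the entire symmetric group $\sym(\mathbb{F}_2^n)$, there is enough freedom to swap fibers by an arbitrary bijection, but this only succeeds once the fibers are balanced in size. Were one to use the inequality from Conjecture~\ref{con:1} directly without first passing to $\hat{M}$, the two fibers would generically differ in cardinality and no fiber-swapping bijection could exist---so Lemma~\ref{lemma:1} is not a cosmetic convenience but the precise mechanism converting the counting inequality into the combinatorial balance that the lifting condition requires.
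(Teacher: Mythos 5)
Your proposal is correct and follows essentially the same route as the paper's own proof: the same identification $\Phi:\mathcal{P}(X)\to\mathbb{F}_2^n$, the same use of Conjecture 1$'$ plus Lemma 1 to obtain the balanced submonoid $\hat{M}=\Phi(\hat{S})$, the same maximal ideal $I=(\Phi(X\setminus\{p\}))$, and the same fiber-swapping construction of the lift of $(01)$. No gaps; the one step you leave implicit --- that $\hat{M}$ is itself an extended submonoid, which follows from $\emptyset, X\in\hat{S}$ --- is immediate.
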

\noindent The forward direction of Theorem~\ref{thm:1} follows by noting that for $n>1$, any maximal ideal in $\mathbb{F}_2^n$ is proper and nonzero, thus $\Gamma^{\sym}(\mathbb{F}_2^n)$ is a monoidal network for $n>1$ if Conjecture~\ref{con:1} holds. 
\\
\indent \indent We move on to what is needed for the backwards direction of Theorem~\ref{thm:1}. First, consider the function $(\cdot )^c: \mathcal{P}(X) \to \mathcal{P}(X)$ sending $A \mapsto A^c$. This function has two important properties that we wish to distill. The first is that it is an involution, and the second is that the ideal in $\mathcal{P}(X)$ generated by the elements of the form $A \cdot A^c$ is proper (indeed, it is the zero ideal). We then make the following definition: if $M \subseteq \mathbb{F}_2^n$ is an extended submonoid and $T: M \to M$ is an involution, we define the \emph{quadratic ideal} of $T$ to be $\Lambda (T) \doteq \langle xT(x) \rangle_{x \in M}$, and we say $T$ is a \emph{pseudocomplement} if the quadratic ideal of $T$ is proper. We have the following equivalent of Conjecture~\ref{con:1}:
\begin{prop} \label{prop:3}
Conjecture~\ref{con:1} holds if and only if for any $n \geq 1$ and extended submonoid $M \subseteq \mathbb{F}_2^n$, there is a pseudocomplement $M \to M$. 
\begin{proof} Suppose Conjecture~\ref{con:1} holds and let $M\subseteq \mathbb{F}_2^n$ be an extended submonoid. By Proposition~\ref{prop:2}, $M$ has a maximal covering ideal, say $I$, with associated submonoid $\hat{M} \subseteq M$. Let $\phi=(01) \in (\Gamma_I^{\sym})^{\pi(\hat{M})}$ and let $\tilde{\phi} \in (\Gamma_0^{\sym})^{\hat{M}}$ be the lift across $I$ of $\phi$ constructed at the end of the proof of Proposition~\ref{prop:2}. Notice that $\tilde{\phi} \in (\Gamma_0^{\sym})^{M}$, and in fact $\tilde{\phi}$ is an involution by construction. We now show that $\Lambda(\tilde{\phi}|_M)$ is a proper ideal of $\mathbb{F}_2^n$.  We argue that $\pi (\Lambda(\tilde{\phi}|_M)) =0$, this implies $\Lambda(\tilde{\phi}|_M) \subseteq I$, which is sufficient. Now, if $x \in M \setminus \hat{M}$, then $x \in I$ and we also have $x \tilde{\phi}(x)=x^2 \in I$, thus $\pi (x \tilde{\phi}(x))=0$, so suppose $x \in \hat{M}$. We have:
\begin{align*}
\pi(x \tilde{\phi}(x)) &= \pi(x) (\pi \circ \tilde{\phi})(x) \\
&= \pi(x) (\phi \circ \pi)(x) \\
&= \pi (x) \phi (\pi (x)) =0
\end{align*}
where the second line follows since $\tilde{\phi}$ lifts $\phi$ on $\hat{M}$ and the third follows since $\mathbb{F}_2^n/I \cong \mathbb{F}_2$ and $\phi=(01)$. The above shows that the restriction $\tilde{\phi}|_M$ is a pseudocomplement on $M$, this proves the forward direction. 
\\
\indent \indent For the backwards direction, let $\Phi: \mathcal{P}(X) \to \mathbb{F}_2^n$ be the isomorphism from Proposition~\ref{prop:2} and choose some intersection-closed $S \subseteq \mathcal{P}(X)$ with $\emptyset, X \in S$. Then $M \doteq \Phi(S)$ is an extended submonoid, by hypothesis there is a pseudocomplement $T: M \to M$. The map $T_* \doteq \Phi^{-1} \circ T \circ \Phi$ is an involution on $S$, we note that $P \doteq \left\{ \left\{A, T_* (A) \right\} \right\}_{A \in S}$ forms a partition of $S$. Now since $\Lambda (T)$ is a proper ideal of $\mathbb{F}_2^n$, so is the ideal $\langle A \cap T_*(A) \rangle_{A \in S}$ in $\mathcal{P}(X)$. In particular, there is a $p \in X$ such that for each $A \in S$, $A \cap T_* (A) \subseteq X \setminus \left\{ p \right\}$. Then for any $A \in S$, either $p \notin A$ or $p \notin T_*(A)$, it follows that for each $C \in P$, $|S_p \cap C| \leq \frac{|C|}{2}$. Immediately, then:
\begin{align*}
|S_p|= \displaystyle \sum_{C \in P} |S_p \cap C| 
\leq \frac{1}{2}\displaystyle \sum_{C \in P} |C| 
= \frac{|S|}{2}.
\end{align*}
This proves the claim.
\end{proof}
\end{prop}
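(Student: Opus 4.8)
The plan is to prove both implications through the ring–set dictionary $\Phi : \mathcal{P}(X) \xrightarrow{\cong} \mathbb{F}_2^n$ already exploited in Proposition~\ref{prop:2}. Under $\Phi$ multiplication becomes intersection and the fixed subring $\{0,1\}$ becomes $\{\emptyset, X\}$, so extended submonoids of $\mathbb{F}_2^n$ correspond exactly to intersection-closed families $S \subseteq \mathcal{P}(X)$ with $\emptyset, X \in S$. The crucial translation is that an involution $T : M \to M$ whose quadratic ideal $\Lambda(T) = \langle xT(x)\rangle_{x \in M}$ is proper corresponds to an involution $T_*$ of $S$ together with a point $p \in X$ absent from every ``overlap'' $A \cap T_*(A)$. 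With this dictionary in hand, a pseudocomplement is precisely the data needed to pair up the sets of $S$ so that no pair forces $p$ into both of its members, which is exactly what a counting argument consumes.

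For the forward direction I would assume Conjecture~\ref{con:1} and produce a pseudocomplement on an arbitrary extended submonoid $M \subseteq \mathbb{F}_2^n$. First I would invoke Proposition~\ref{prop:2} to obtain a maximal covering ideal $I$ of $M$ with associated submonoid $\hat M$, and then reuse the explicit lift $\tilde\phi$ of the transposition $\phi = (01) \in (\Gamma^{\sym}_I)^{\pi(\hat M)}$ built at the end of that proof. Since $\tilde\phi$ fixes $M \setminus \hat M$ pointwise and swaps $I \cap \hat M$ with $I^c \cap \hat M$ via a bijection and its inverse, it is manifestly an involution of $\mathbb{F}_2^n$ stabilizing all of $M$. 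It then remains to show $\Lambda(\tilde\phi|_M)$ is proper, and I would do this by proving $\Lambda(\tilde\phi|_M) \subseteq I$: for $x \in M \setminus \hat M \subseteq I$ one has $x\tilde\phi(x) = x^2 \in I$, while for $x \in \hat M$, applying the projection $\pi$ and using that $\tilde\phi$ lifts $\phi$ together with $\mathbb{F}_2^n / I \cong \mathbb{F}_2$ gives $\pi(x)\,\phi(\pi(x)) = 0$.

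For the backward direction I would start from an intersection-closed $S$ with $\emptyset, X \in S$, set $M = \Phi(S)$, take the hypothesized pseudocomplement $T$, and transport it to the involution $T_* = \Phi^{-1} T \Phi$ on $S$, whose fixed points and transposed pairs $\{A, T_*(A)\}$ partition $S$. Properness of $\Lambda(T)$ forces the ideal $\langle A \cap T_*(A)\rangle_{A \in S}$ of $\mathcal{P}(X)$ to be proper; since $\mathcal{P}(X) \cong \mathbb{F}_2^n$ is a finite product of copies of $\mathbb{F}_2$, this ideal lies in a maximal ideal, i.e. in the set of subsets avoiding some fixed $p \in X$, so $A \cap T_*(A) \subseteq X \setminus \{p\}$ for every $A$. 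Hence each block of the partition contains at most one set through $p$, and summing $|S_p \cap C| \le \tfrac{1}{2}|C|$ over the blocks yields $|S_p| \le \tfrac{|S|}{2}$. The main obstacle, I expect, lies in the forward direction: one must verify that the covering-ideal lift can be taken to be a genuine involution on the whole of $M$ (not merely on $\hat M$) and that its quadratic ideal is trapped inside $I$, and this is exactly where the balance $|\hat S_p| = \tfrac{|\hat S|}{2}$ from Lemma~\ref{lemma:1} and the rank-one residue field $\mathbb{F}_2^n/I \cong \mathbb{F}_2$ both do essential work. The extraction of the point $p$ in the converse is comparatively routine, being a standard fact about ideals in a finite product of fields.
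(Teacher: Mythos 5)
Your proposal is correct and follows essentially the same route as the paper's own proof: the forward direction reuses the explicit lift $\tilde\phi$ of $(01)$ from Proposition~\ref{prop:2} and traps $\Lambda(\tilde\phi|_M)$ inside the maximal covering ideal $I$, while the backward direction transports the pseudocomplement to an involution $T_*$ on $S$, extracts the point $p$ from the properness of $\langle A \cap T_*(A)\rangle_{A\in S}$, and sums over the resulting partition. No substantive differences from the paper's argument.
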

\indent \indent Proposition~\ref{prop:3} gives a formulation of Conjecture~\ref{con:1} without any mention of $S_p$ or its cardinality. We pause momentarily to make an observation about what Conjecture~\ref{con:1} is actually saying in view of this: we already have a \emph{global} pseudocomplement $\mathbb{F}_2^n \to \mathbb{F}_2^n$, given by $x \mapsto x+1$. The question is then whether one of these always exists at the more localized level of extended submonoids. 
\\
\indent \indent In order to prove the backwards direction of Theorem~\ref{thm:1}, it suffices to show that if $\Gamma^{\sym}(\mathbb{F}_2^n)$ is a monoidal network for every $n>1$, then every extended submonoid in $\mathbb{F}_2^n$ has a pseudocomplement. The class of rings of the form $\mathbb{F}_2^n$ has the property that it is closed (up to isomorphism) under quotients by proper ideals, we wish to exploit this to prove the claim. As it happens, the hypothesis that $\Gamma^{\sym}(\mathbb{F}_2^n)$ forms a monoidal network for each $n>1$ is precisely what is needed for the inductive lifting of pseudocomplements.
\begin{prop}\label{prop:4}
Suppose $\Gamma^{\sym}(\mathbb{F}_2^n)$ is a monoidal network for every $n>1$. Then any extended submonoid $M \subseteq \mathbb{F}_2^n$, $n \geq 1$, has a pseudocomplement.
\begin{proof}
We argue by induction on $n$. For $n=1$, the only extended submonoid of $\mathbb{F}_2$ is $\mathbb{F}_2$ itself, for which the transposition $(01)$ is a pseudocomplement. Now suppose the claim holds for all $1 \leq m<n$ and let $M \subseteq \mathbb{F}_2^n$ be an extended submonoid. By hypothesis, $\Gamma^{\sym}(\mathbb{F}_2^n)$ is a monoidal network, therefore $M$ has a proper, nonzero covering ideal $I$ with some associated submonoid $\hat{M} \subseteq M$. Now we have $\mathbb{F}_2^n/I \cong \mathbb{F}_2^m$ for some $1 \leq m<n$, since $I$ is proper and nonzero. The fixed subring of $\mathbb{F}_2^n/I$ is then isomorphic to $\mathbb{F}_2$ and is contained in the multiplicative submonoid $\pi (\hat{M})$, it follows that $\pi (\hat{M})$ is an extended submonoid of $\mathbb{F}_2^n/I$. By the inductive hypothesis, there is a pseudocomplement $T: \pi (\hat{M}) \to \pi(\hat{M})$. Pseudocomplements are bijections, so we may assume that $T \in \sym (\mathbb{F}_2^n/I)$, after perhaps extending $T$ to the whole quotient. Then $T \in (\Gamma_I^{\sym})^{\pi (\hat{M})}$, so there is a lift of $T$ along $I$ to some $\tilde{T} \in (\Gamma_0^{\sym})^{\hat{M}}$. We are not guaranteed that the restriction of $\tilde{T}$ to $M$ is a pseudocomplement on $M$. We can, however, construct one from $\tilde{T}$. We begin this process now.
\\
\indent \indent For each $a \in \pi (\hat{M})$, set $V_a \doteq \left\{ \pi^{-1}(a) \cap \hat{M}, \pi^{-1}(T(a)) \cap \hat{M} \right\}$ and note that since $T^2= \id$ and $T\pi = \pi \tilde{T}$ on $\hat{M}$, $\tilde{T}$ gives a bijection $\pi^{-1}(a) \cap \hat{M} \to \pi^{-1}(T(a)) \cap \hat{M}$. Let $\psi : \left\{ V_a \right\}_{a \in \pi (\hat{M})} \to \displaystyle \bigcup_{ a \in \pi (\hat{M})} V_a$ have $\psi (V_a) \in V_a$ for each $a \in \pi (\hat{M})$. Define $\gamma : M \to M$ by:
\begin{align*}
\gamma: x \mapsto \begin{cases} 
      x & x \in M \setminus \hat{M} \\
      \tilde{T}(x) & x \in \psi (V_a) \\
      \tilde{T}^{-1}(x) & x \in B \in V_a, B \neq \psi (V_a). 
   \end{cases}
\end{align*}
The function given above is well-defined since the collection $\displaystyle \bigcup_{ a \in \pi (\hat{M})} V_a$ is a partition of $\hat{M}$. It is also an involution on $M$, since $\tilde{T}$ transposes the elements of $V_a$. To complete the proof, we need to show that the quadratic ideal of $\gamma$ is proper. We will do this by showing that $\pi (\Lambda (\gamma)) \subseteq \Lambda (T)$.  Let $x \in M$. If $x \in M \setminus \hat{M}$, then $x\gamma (x)=x^2$. Now $M \setminus \hat{M} \subseteq I$, so  that $x^2 \in I$ and $\pi (x \gamma (x))=0 \in \Lambda (T)$. Now suppose $x \in \hat{M}$. Then $x \in B \in V_a$ for some $a \in \pi(\hat{M})$. We always have, in this case, $x \gamma (x)=y \tilde{T}(y)$ for some $y \in \psi (V_a)$. Indeed, if $x \in \psi (V_a)$ this follows immediately. Otherwise, $B \neq \psi (V_a)$ and $\tilde{T}$ gives a bijection $\psi (V_a) \to B$, so $x= \tilde{T}(y)$ for some $y \in \psi (V_a)$ and $x \gamma (x)= y \tilde{T}(y)$. Note then that:
\begin{align*}
\pi( x \gamma (x))= \pi(y) \pi( \tilde{T}(y))= \pi (y) T(\pi(y)) \in \Lambda (T)
\end{align*}
since $\pi \tilde{T}= T \pi$ on $\hat{M}$. This shows that $\pi (x \gamma (x)) \in \Lambda (T)$ for every $x \in M$, it follows that $\gamma$ is a pseudocomplement on $M$.
\end{proof}
\end{prop}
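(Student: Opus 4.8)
The plan is to induct on $n$, using the monoidal network hypothesis at each stage to pass to a proper quotient via a covering ideal, and then to lift a pseudocomplement from that quotient back up to $M$. For the base case $n=1$, the only extended submonoid of $\mathbb{F}_2$ is $\mathbb{F}_2$ itself (its own fixed subring), and the transposition $(01)$ is a pseudocomplement since its quadratic ideal is $\langle 0\cdot 1\rangle = 0$, which is proper. For the inductive step I would assume the result for all $1\le m<n$ and fix an extended submonoid $M\subseteq \mathbb{F}_2^n$. Since $\Gamma^{\sym}(\mathbb{F}_2^n)$ is a monoidal network, $M$ has a proper nonzero covering ideal $I$ with associated submonoid $\hat M$, and because $I$ is proper and nonzero the quotient $\mathbb{F}_2^n/I$ is isomorphic to $\mathbb{F}_2^m$ for some $1\le m<n$. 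The routine preliminary checks are that $\pi(\hat M)$ is an extended submonoid of the quotient (it is a multiplicative image of $\hat M$ and contains the image $\{0,1\}$ of the fixed subring), so the inductive hypothesis yields a pseudocomplement $T$ on $\pi(\hat M)$; after extending $T$ to all of $\mathbb{F}_2^n/I$ it lies in $(\Gamma_I^{\sym})^{\pi(\hat M)}$, and the covering property produces a lift $\tilde T\in (\Gamma_0^{\sym})^{\hat M}$ with $\pi\tilde T = T\pi$ on $\hat M$.

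The heart of the argument, and the step I expect to be the main obstacle, is that $\tilde T$ need not restrict to an \emph{involution} on $M$, so it is not immediately a candidate pseudocomplement even though it records the correct fiberwise data. The key observation is that $\tilde T$ carries each fiber $\pi^{-1}(a)\cap\hat M$ bijectively onto $\pi^{-1}(T(a))\cap\hat M$, so the two fibers lying over an orbit $\{a,T(a)\}$ of $T$ have equal cardinality. I would use this to reassemble $\tilde T$ into a genuine involution $\gamma$ on $M$: set $\gamma=\id$ off $\hat M$; for each two-element orbit $\{a,T(a)\}$ with $a\ne T(a)$ choose a distinguished one of the two fibers and let $\gamma$ agree with $\tilde T$ on it and with $\tilde T^{-1}$ on the other, which makes $\gamma$ swap the pair involutively; and on each fiber over a fixed point $a=T(a)$ of $T$ simply take $\gamma$ to be the identity (any self-inverse bijection of that fiber would do). The delicate point is precisely the fixed-point fibers, where $\tilde T$ itself may fail to be an involution, which is why one cannot merely restrict $\tilde T$ and must instead overwrite it there.

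It remains to verify that $\gamma$ is a pseudocomplement, i.e. that $\Lambda(\gamma)$ is proper, and the strategy is to show $\pi(\Lambda(\gamma))\subseteq\Lambda(T)$. First one checks that the reassembled $\gamma$ still satisfies $\pi\gamma=T\pi$ on $\hat M$; this holds piecewise, using $\pi\tilde T=T\pi$ together with the fact that all elements of a single fiber share a common $\pi$-value (so that any bijection between the fibers over $a$ and $T(a)$ is $\pi$-compatible, and the identity on a fixed-point fiber gives $\pi\gamma(x)=\pi(x)=T(\pi(x))$). Then for $x\in M\setminus\hat M$ we have $x\in I$, so $x\gamma(x)=x^2\in I$ projects to $0\in\Lambda(T)$, while for $x\in\hat M$ one computes $\pi(x\gamma(x))=\pi(x)\,T(\pi(x))\in\Lambda(T)$. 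Since $T$ is a pseudocomplement, $\Lambda(T)$ is a proper ideal of $\mathbb{F}_2^n/I$, whence $\pi(\Lambda(\gamma))\subsetneq \mathbb{F}_2^n/I$; as $\pi$ is surjective, an ideal mapping onto a proper ideal of the quotient cannot be all of $\mathbb{F}_2^n$, so $\Lambda(\gamma)$ is proper and $\gamma$ is the desired pseudocomplement, completing the induction.
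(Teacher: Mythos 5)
Your proposal is correct and follows essentially the same route as the paper: induct on $n$, use the covering ideal to pass to $\mathbb{F}_2^n/I$, pull a pseudocomplement $T$ on $\pi(\hat M)$ up to a lift $\tilde T$, reassemble $\tilde T$ into a genuine involution $\gamma$ on $M$ by choosing a distinguished fiber over each $T$-orbit, and then verify $\pi(\Lambda(\gamma))\subseteq\Lambda(T)$ to conclude properness. The one substantive difference is your treatment of the fixed-point fibers of $T$. The paper's construction uses the two-element (as a set, possibly one-element) collection $V_a=\left\{\pi^{-1}(a)\cap\hat M,\ \pi^{-1}(T(a))\cap\hat M\right\}$ and a choice function $\psi$, and when $T(a)=a$ this forces $\gamma=\tilde T$ on the whole fiber $\pi^{-1}(a)\cap\hat M$; since $\tilde T$ restricted to that fiber is merely a bijection of the fiber to itself and need not square to the identity, the paper's assertion that $\gamma$ is an involution is not fully justified at such fibers (and $T$ can certainly have fixed points, e.g.\ $T(0)=0$ is consistent with $\Lambda(T)$ being proper). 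Your version overwrites $\gamma$ with the identity on fixed-point fibers, which restores the involution property at no cost: there one still has $\pi(x\gamma(x))=\pi(x)^2=a\,T(a)\in\Lambda(T)$. So your argument is, if anything, a slightly more careful rendering of the paper's proof; everything else, including the final deduction that an ideal of $\mathbb{F}_2^n$ projecting into the proper ideal $\Lambda(T)$ cannot be all of $\mathbb{F}_2^n$, matches the paper.
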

\indent \indent As was remarked earlier, the forward direction of Theorem~\ref{thm:1} follows from Proposition~\ref{prop:2} and now the backwards direction follows from Propositions~\ref{prop:3} and~\ref{prop:4}, thus the proof is complete. We are also now in a position to answer a question that the inquisitive reader may have regarding all that has been said up to this point: why do we use extended submonoids rather than simply multiplicative submonoids of $\mathbb{F}_2^n$? It is easy to see that the entire machinery developed above breaks down if we do not require $\hat{M}$ and $M$ to be extended submonoids, for otherwise we may simply choose a maximal ideal $I$ of $\mathbb{F}_2^n$, form the multiplicative submonoid $I^c \cap M$, and note that its image under projection is a singleton. Trivially, then, $I$ would be a ``covering ideal" of $M$, but this would say nothing at all about $M$ itself.
\\
\indent \indent It is natural at this time to ask similar questions for $\Gamma^{\sym} (R)$ for commutative rings $R$ other than $\mathbb{F}_2^n$, i.e. when exactly is this a monoidal network? It is easy to make the answer go either way. On the one hand, the naive generalization of Conjecture~\ref{con:1} fails when we replace $\mathbb{F}_2$ with $\mathbb{F}_p$ for prime $p>2$:
\begin{prop} \label{prop:5}
If $p>2$ is prime, then $\Gamma^{\sym} (\mathbb{F}_p^n)$ is not a monoidal network for any $n \geq 1$. 
\begin{proof}
Suppose $\Gamma^{\sym} (\mathbb{F}_p^n)$ is a monoidal network, we will prove that $p=2$. If $n=1$ then $\Gamma^{\sym} (\mathbb{F}_p^n)$ is not a monoidal network since $\mathbb{F}_p$ is a field, so $n>1$. Consider the extended submonoid $M \doteq (\mathbb{F}_p^n)^\times \cup \left\{0 \right\}$. By hypothesis, $M$ has a proper, nonzero covering ideal $I$ with some associated submonoid $\hat{M} \subseteq M$. Now $0 \in \hat{M}$, so any $x \in M \setminus \hat{M}$ is a unit, but $M \setminus \hat{M} \subseteq I$, so since $I$ is proper we must have $M= \hat{M}$. We have $\mathbb{F}_p^n/I \cong \mathbb{F}_p^m$ for some $1 \leq m<n$, and we also have $\pi (M)= (\mathbb{F}_p^m)^\times \cup \left\{0 \right\}$. By definition, for each $ \phi \in (\Gamma_I^{\sym})^{\pi (M)}= \sym (\pi (M))$ there is a $\tilde{\phi} \in (\Gamma_0^{\sym})^M= \sym (M)$ with $\phi \pi=\pi \tilde{\phi}$ on $M$. Since $(\Gamma_I^{\sym})^{\pi (M)}$ acts transitively on $\pi (M)$, it follows that for any two $a,b \in \pi (M)$, $|\pi^{-1}(a) \cap M|= |\pi^{-1}(b) \cap M|$. But $|\pi^{-1}(0) \cap M|=1$, so:
\begin{align*}
(p-1)^n+1= |M|= \displaystyle \sum_{a \in \pi (M)} |\pi^{-1}(a) \cap M| = \displaystyle \sum_{a \in \pi (M)} 1= |\pi (M)|= (p-1)^m+1.
\end{align*}
Since $m<n$, we conclude that $p=2$.
\end{proof}
\end{prop}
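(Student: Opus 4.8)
The plan is to argue by contradiction: assume $\Gamma^{\sym}(\mathbb{F}_p^n)$ is a monoidal network and deduce $p=2$. The first order of business is to understand what an extended submonoid of $\mathbb{F}_p^n$ must contain. Since the ring automorphisms of $\mathbb{F}_p^n = \prod_{i=1}^n \mathbb{F}_p$ are exactly the coordinate permutations (each factor $\mathbb{F}_p$ being rigid), the fixed subring is the diagonal copy of $\mathbb{F}_p$; in particular every extended submonoid contains $0$ and $1$. With this in hand I would probe the network structure with a single, carefully chosen extended submonoid rather than all of them. The natural candidate is $M \doteq (\mathbb{F}_p^n)^\times \cup \{0\}$, the units together with zero, which is multiplicatively closed and contains the diagonal, hence is an extended submonoid of cardinality $(p-1)^n + 1$.

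Assuming the network property, $M$ has a proper nonzero covering ideal $I$ with some associated submonoid $\hat{M}$. First I would pin down $\hat{M}$: since $\hat{M}$ is itself an extended submonoid it contains $0$, so every element of $M \setminus \hat{M}$ is a unit; but $M \setminus \hat{M} \subseteq I$ and a proper ideal contains no units, forcing $M = \hat{M}$. Next I would identify the quotient. Because the ideals of $\mathbb{F}_p^n$ correspond to subsets of the coordinates, a proper nonzero $I$ gives $\mathbb{F}_p^n / I \cong \mathbb{F}_p^m$ with $1 \le m < n$, and a short check shows $\pi(M) = (\mathbb{F}_p^m)^\times \cup \{0\}$, of cardinality $(p-1)^m + 1$.

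The engine of the argument is a fiber-counting observation. The lifting condition forces each lift $\tilde{\phi} \in \sym(M)$ of a permutation $\phi$ of $\pi(M)$ to carry the fiber $\pi^{-1}(a)\cap M$ bijectively onto $\pi^{-1}(\phi(a))\cap M$; since the set-wise stabilizer $(\Gamma_I^{\sym})^{\pi(M)}$ restricts to the full symmetric group on $\pi(M)$ and is therefore transitive, all fibers over $\pi(M)$ must have equal cardinality. The decisive point is that the fiber over $0$ is a singleton: $\pi^{-1}(0) \cap M = I \cap M = \{0\}$, because the only non-unit of $M$ is $0$ and $I$ contains no units. Hence every fiber is a singleton and $|M| = |\pi(M)|$, i.e.\ $(p-1)^n + 1 = (p-1)^m + 1$. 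With $m < n$ this is only possible when $p - 1 \in \{0,1\}$, and primality gives $p = 2$, the desired contradiction.

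The step I expect to require the most care is the choice of the probing submonoid $M$ together with the recognition that its zero-fiber is forced to be a singleton: everything downstream is bookkeeping, but the whole argument hinges on selecting an $M$ whose nonzero elements are all units, so that $\hat{M} = M$ and the zero-fiber cannot absorb any extra elements. Verifying that the stabilizer acts transitively on $\pi(M)$—so that all fibers are genuinely forced to the same size—is routine once one notes that any permutation of $\pi(M)$ extends by the identity to an element of $\sym(\mathbb{F}_p^m)$ fixing $\pi(M)$ set-wise.
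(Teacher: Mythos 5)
Your proposal is correct and follows essentially the same route as the paper: probe with $M = (\mathbb{F}_p^n)^\times \cup \{0\}$, force $\hat{M}=M$ since a proper ideal contains no units, and use transitivity of the stabilizer on $\pi(M)$ together with the singleton zero-fiber to get $(p-1)^n+1=(p-1)^m+1$ with $m<n$, hence $p=2$. The only cosmetic difference is that the paper dispatches the $n=1$ case explicitly at the outset ($\mathbb{F}_p$ is a field), which your argument covers implicitly by assuming a proper nonzero covering ideal exists.
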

\noindent In the other direction, there is a broad class of rings for which $\Gamma^{\sym}(R)$ is a monoidal network.
\begin{prop}\label{prop:6}
If $R$ is rigid and not a field, then $\Gamma^{\sym} (R)$ is a monoidal network.
\begin{proof}
Since $R$ is rigid, $R^{\aut (R)}=R$, so the only extended submonoid of $R$ is $R$ itself. We then need only guarantee that $R$ has a proper, nonzero covering ideal under $\Gamma^{\sym} (R)$. For this, let $I \subseteq R$ be any proper, nonzero ideal. Given $a,b \in R/I$, let $\gamma_{a,b}: \pi^{-1}(a) \to \pi^{-1}(b)$ be a bijection. Then given $ \phi \in \Gamma_I^{\sym}= \sym (R/I)$, the map:
\begin{align*}
\tilde{\phi}: (x \in \pi^{-1}(a)) \mapsto \gamma_{a, \phi(a)} (x)
\end{align*}
is easily checked to be a lift along $I$ of $\phi$.
\end{proof}
\end{prop}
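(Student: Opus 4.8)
The plan is to use rigidity to collapse the family of extended submonoids to the single monoid $R$, and then to verify the covering-ideal condition by a direct fibre-wise lifting construction that succeeds because $\Gamma^{\sym}(R)$ constrains lifts only up to bijectivity.

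First I would observe that rigidity means $\aut(R) = \{\id\}$, so $R^{\aut(R)} = R$. Since an extended submonoid of $R$ must by definition contain $R^{\aut(R)} = R$, the only extended submonoid is $R$ itself. To conclude that $\Gamma^{\sym}(R)$ is a monoidal network it therefore suffices to produce one proper, nonzero covering ideal of $M \doteq R$. As $R$ is not a field it has a nonzero non-unit $a$, and I would take $I \doteq (a)$, a proper, nonzero ideal, as the candidate. For the associated submonoid I would choose $\hat{M} \doteq R = M$, so that $M \setminus \hat{M} = \emptyset \subseteq I$ and the requirement that $\hat{M}$ be an extended submonoid contained in $M$ holds trivially.

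With these choices, writing $\pi : R \to R/I$ for the projection, surjectivity of $\pi$ gives $\pi(\hat{M}) = R/I$, so the relevant stabilizers are the full symmetric groups $\Gamma_I^{\pi(\hat{M})} = \sym(R/I)$ and $\Gamma_0^{\hat{M}} = \sym(R)$. The covering condition thus reduces to showing that every $\phi \in \sym(R/I)$ lifts to some $\tilde{\phi} \in \sym(R)$ with $\phi\pi = \pi\tilde{\phi}$. For this I would use that each fibre $\pi^{-1}(a)$ is a coset of $I$, hence of cardinality $|I|$, so all fibres are equinumerous; fixing a bijection $\gamma_{a,b} : \pi^{-1}(a) \to \pi^{-1}(b)$ for each ordered pair $a,b \in R/I$, I would set $\tilde{\phi}(x) \doteq \gamma_{a,\phi(a)}(x)$ for $x \in \pi^{-1}(a)$. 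Because $\phi$ permutes the fibres and each within-fibre map is a bijection, $\tilde{\phi} \in \sym(R)$, and by construction $\pi(\tilde{\phi}(x)) = \phi(a) = \phi(\pi(x))$, so the square commutes and $I$ is a covering ideal.

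I do not anticipate a serious obstacle: rigidity removes all the submonoid bookkeeping, and the symmetric multiaction imposes no algebraic compatibility on the lifts. The only point needing mild care is the equinumerosity of the fibres of $\pi$, which is exactly what makes the fibre-wise definition of $\tilde{\phi}$ a well-defined permutation of $R$; once that is in hand, commutativity of the lifting diagram is immediate.
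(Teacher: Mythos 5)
Your proposal is correct and follows essentially the same route as the paper: rigidity forces $R$ to be the only extended submonoid, and the covering condition for any proper nonzero ideal $I$ is verified by assembling fibre-wise bijections $\gamma_{a,\phi(a)}$ into a lift $\tilde{\phi}\in\sym(R)$. Your explicit remark that the fibres are cosets of $I$ and hence equinumerous just makes precise what the paper leaves implicit when it posits the bijections $\gamma_{a,b}$.
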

\noindent So, for instance $\Gamma^{\sym} (\mathbb{Z})$ is a monoidal network. The proof of Proposition~\ref{prop:6} exploits the fact that $R^{\aut (R)}$ is ``large" in this case, and one may hope to construct further examples of $R$ for which $\Gamma^{\sym} (R)$ is a monoidal network by finding rings with ``large" fixed subrings. It is interesting to note that Conjecture~\ref{con:1} asserts something that is essentially the opposite of this intuition: the fixed subring of $\mathbb{F}_2^n$ is always very small, isomorphic to $\mathbb{F}_2$, but Conjecture~\ref{con:1} asserts that $\Gamma^{\sym} (\mathbb{F}_2^n)$ is a monoidal network for $n>1$ regardless.
\\
\indent \indent We conclude this section by restating Poonen's conjectures~\cite{Poonen} in the language that has been introduced thus far. As this is mostly an exercise in translation, we omit proofs, with one exception that we will discuss. The first of these (Conjecture 1 in~\cite{Poonen}) is just Conjecture~\ref{con:1} here, the restatement of this is the content of Theorem~\ref{thm:1}. The statements of the remaining conjectures for intersection-closed sets are listed below\footnote{The original statements rely on the idea of a \emph{block}, which is an equivalence class of elements of $X_S$, two elements being equivalent if they lie in exactly the same sets in $S$. The intersection-closed sets with singleton blocks are commonly called \emph{separating}~\cite{Bruhn2}.}:
\begin{con} \label{con:2}
Let $S$ be a separating intersection-closed set. Then either $S$ is a power set or there is some $\alpha \in X_S$ with $|S_{\alpha}|<\frac{|S|}{2}$. 
\end{con}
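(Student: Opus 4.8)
The plan is to deduce Conjecture~\ref{con:2} from Conjecture~\ref{con:1}; since these statements are known to be equivalent in Poonen's framework, assuming Conjecture~\ref{con:1} (which is available to us, being stated above) is the natural route. First I would normalize: by the reduction of~\cite{Poonen} already invoked before Lemma~\ref{lemma:1}, we may assume $\emptyset, X_S \in S$, and this passage preserves the separating hypothesis. I would then prove the contrapositive of the desired conclusion, namely the structural statement that if $S$ is separating, intersection-closed, contains $\emptyset$ and $X_S$, and every $\alpha \in X_S$ satisfies $|S_\alpha| \geq \frac{|S|}{2}$, then $S$ is the full power set $\mathcal{P}(X_S)$. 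Conjecture~\ref{con:2} is precisely the contrapositive of this implication, since a power set has $|S_\alpha| = \frac{|S|}{2}$ for every $\alpha$, so the only way to avoid a strictly rare element is to be a power set. The argument proceeds by induction on $n = |X_S|$, the base cases $n \leq 1$ being immediate.

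The engine is a dichotomy according to whether each coatom $X_S \setminus \{\alpha\}$ lies in $S$. Suppose first that $X_S \setminus \{\alpha\} \in S$ for a given $\alpha$. Then intersecting any $A \in S_\alpha$ with this coatom gives $A \setminus \{\alpha\} \in S$, so $A \mapsto A \setminus \{\alpha\}$ is an injection $S_\alpha \hookrightarrow S \setminus S_\alpha$; hence $|S_\alpha| \leq |S| - |S_\alpha|$, which forces $|S_\alpha| = \frac{|S|}{2}$ under our standing hypothesis and makes the injection a bijection. This bijectivity says precisely that $\alpha$ is a \emph{free coordinate}: whenever $B \in S$ with $\alpha \notin B$ we also have $B \cup \{\alpha\} \in S$, so that $S$ decomposes as the product of $F \doteq \{A \in S : \alpha \notin A\}$ with $\mathcal{P}(\{\alpha\})$. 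The factor $F$ is a separating intersection-closed family on $X_S \setminus \{\alpha\}$ containing its own empty and top sets (note $X_S \setminus \{\alpha\} \in F$ in this case), and the product structure preserves each ratio $\frac{|S_\beta|}{|S|}$, so every element of $F$ still meets at least half of its members. The inductive hypothesis then identifies $F$ as a power set, whence $S = \mathcal{P}(X_S)$. Thus if every coatom is present we are finished, with no appeal to Conjecture~\ref{con:1}.

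The entire difficulty is therefore concentrated in the complementary case, where some coatom $X_S \setminus \{\alpha\}$ is \emph{absent} from $S$; this is where I expect to invoke Conjecture~\ref{con:1} in earnest. The goal there is to show that the standing hypothesis (every element in at least half the sets) is in fact incompatible with a missing coatom, so that this case cannot arise and we are always thrown back on the free-coordinate analysis. The natural tactic is a contraction: using the separating hypothesis, identify $\alpha$ with an element to which it is coupled by the absence of $X_S \setminus \{\alpha\}$, producing a smaller separating intersection-closed family (after completing it by its top element, as in the normalization of~\cite{Poonen}); apply Conjecture~\ref{con:1} to obtain an element lying in at most half of the sets, and lift this back through the contraction to an element of $X_S$ lying in \emph{strictly} fewer than half the members of $S$, contradicting the hypothesis. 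The main obstacle is exactly this contraction step: because intersection-closed families do not restrict cleanly (the union of members need not be a member), defining the quotient family, verifying that it remains separating, and, most delicately, controlling how the counts $|S_\alpha|$ transform so that a non-strict inequality downstairs lifts to a \emph{strict} one upstairs, is the technical heart of the matter. That this is hard is no accident: it is precisely the passage from the non-strict conclusion of Conjecture~\ref{con:1} to the strict conclusion of Conjecture~\ref{con:2} that carries the full content of the equivalence between them.
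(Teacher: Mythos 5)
You are attempting to prove a statement that the paper itself does not prove and does not claim to prove: Conjecture~\ref{con:2} is one of Poonen's open conjectures, reproduced here as a conjecture. The paper only (i) restates it in monoidal-network language as Conjecture~2$'$, and (ii) proves Proposition~\ref{prop:8}, which relates Conjectures~\ref{con:2}, \ref{con:3} and \ref{con:4}; in doing so it asserts only the unconditional implication $(2) \Rightarrow (1)$, never the converse. Your opening premise --- that Conjectures~\ref{con:1} and \ref{con:2} ``are known to be equivalent in Poonen's framework'' --- is therefore false, and your entire strategy rests on exactly the unknown direction $(1) \Rightarrow (2)$. Conjecture~\ref{con:2} is a strict-inequality refinement of Conjecture~\ref{con:1} together with a structural characterization of the equality case, and that refinement is precisely what is open; it is not a formal consequence of the non-strict statement. (Note also that Conjecture~\ref{con:1} is itself the open union-closed sets conjecture, so even a correct conditional derivation would not yield a proof of the statement as posed.)

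Beyond the faulty premise, there is a concrete unexecuted gap. Your free-coordinate analysis in the case where some coatom $X_S \setminus \{\alpha\}$ lies in $S$ is sound: the map $A \mapsto A \setminus \{\alpha\}$ injects $S_\alpha$ into $S \setminus S_\alpha$, the standing hypothesis forces this to be a bijection, $S$ decomposes as $F \times \mathcal{P}(\{\alpha\})$, and induction applies. But the whole content of the conjecture is concentrated in the complementary case, where no coatom of $X_S$ belongs to $S$, and there you give only a plan --- ``contract'' along the separating structure, apply Conjecture~\ref{con:1} downstairs, and ``lift'' a non-strict inequality to a strict one upstairs --- while yourself conceding that defining the quotient family, verifying it stays separating, and controlling how the counts $|S_\alpha|$ transform is ``the technical heart of the matter.'' No such construction is supplied, and none is known; converting $\leq \frac{|S|}{2}$ into $< \frac{|S|}{2}$ outside the power-set case is exactly where the conjecture lives. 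As written, the proposal establishes nothing beyond the easy coatom case and cannot be compared to a proof in the paper, because the paper, correctly, offers none.
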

\begin{con} \label{con:3}
Let $S$ be an intersection-closed set. If there is exactly one $\alpha \in X_S$ with $|S_{\alpha}| \leq \frac{|S|}{2}$, then $X_S$ is the only set in $S$ containing $\alpha$. 
\end{con}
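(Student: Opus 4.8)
The plan is to translate the set-theoretic conclusion into a statement about a single distinguished set, and then to argue by contradiction, manufacturing a \emph{second} scarce element out of any failure of that conclusion. Call a point $p \in X_S$ \emph{scarce} if $|S_p| \le \frac{|S|}{2}$ and \emph{abundant} otherwise, and assume (as the conclusion presupposes, and as is harmless by the reductions of~\cite{Poonen}) that $X_S \in S$. First I would exploit intersection-closedness to reformulate the conclusion: for any $p \in X_S$ the family $S_p$ is itself intersection-closed, so it has a least member $m_p := \bigcap_{A \in S_p} A \in S$, and one checks directly that $S_p = \{A \in S : A \supseteq m_p\}$ is the principal filter it generates. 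Hence ``$X_S$ is the only set of $S$ containing $\alpha$'' is equivalent to the single equality $m_\alpha = X_S$. Since $X_S \in S_\alpha$ always, the case $|S_\alpha| = 1$ gives the conclusion for free, so the entire content lies in the case $|S_\alpha| \ge 2$, equivalently $m_\alpha \subsetneq X_S$, which I now assume toward a contradiction.

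Given $m_\alpha \subsetneq X_S$, the goal is to produce some $\gamma \ne \alpha$ that is scarce, violating uniqueness. I would split $S = S_\alpha \sqcup S_{\bar\alpha}$ with $S_{\bar\alpha} := \{A \in S : \alpha \notin A\}$ and introduce two auxiliary intersection-closed families: the quotient $U := \{A \setminus m_\alpha : A \in S_\alpha\}$, living on the nonempty ground set $X_S \setminus m_\alpha$ and satisfying $|U| = |S_\alpha| \ge 2$ precisely because $m_\alpha \subsetneq X_S$, and the $\alpha$-free family $T := S_{\bar\alpha}$, with $|T| = |S| - |S_\alpha| \ge \frac{|S|}{2}$. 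The filter description yields, for every $\gamma \in X_S \setminus m_\alpha$, the clean additive identity $|S_\gamma| = |U_\gamma| + |T_\gamma|$, where $|U_\gamma|$ and $|T_\gamma|$ count the sets of $S_\alpha$, respectively the $\alpha$-free sets, containing $\gamma$. Thus such a $\gamma$ is scarce in $S$ exactly when it is simultaneously scarce in $U$ and in $T$.

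This is where the union-closed conjecture enters, which is legitimate here since by Theorem~\ref{thm:1} it is the hypothesis governing the whole section (and Poonen's conjectures are naturally read relative to it): Conjecture~\ref{con:1} applied to $U$ supplies a $\gamma \in X_S \setminus m_\alpha$ with $|U_\gamma| \le \frac{|S_\alpha|}{2}$, for which the identity gives scarcity in $S$ the moment $|T_\gamma| \le \frac{|T|}{2}$, and any such $\gamma$ is $\ne \alpha$ since $\alpha \in m_\alpha$. The genuine obstacle — and the step on which everything turns — is that uniqueness forces an exact trade-off: because every point $\ne \alpha$ is abundant in $S$, the identity makes each point scarce in $U$ abundant in $T$ and vice versa, so the two scarcity conditions resist being met at a common point of $X_S \setminus m_\alpha$.

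To break this deadlock I would induct on $|S|$, deleting a basis set $A \ne X_S$ (a member of $S$ that is not an intersection of two strictly larger members), exactly as in the proof of Lemma~\ref{lemma:1}: this keeps $S \setminus \{A\}$ intersection-closed and lowers each $|S_\beta|$ by $[\beta \in A]$, so the scarce/abundant classification moves in a controlled way. The delicate point — choosing the basis set so that the inductive hypothesis returns a scarce point that survives in $S$, while neither creating nor destroying spurious scarce points as the threshold $\frac{|S|}{2}$ shifts — is where I expect the real work to lie; the finitely many degenerate configurations ($|S| \le 3$, or $|S_\alpha| = 1$) are handled by inspection.
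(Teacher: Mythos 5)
This statement is not something the paper proves, and you should flag that before anything else: it is Conjecture~\ref{con:3}, a verbatim intersection-closed translation of Poonen's Conjecture 3, and it is \emph{open}. The paper's only engagement with it is (i) to restate it in monoidal-network language (Conjecture 3$'$) and (ii) to prove, in Proposition~\ref{prop:8}, that the implication $(3) \Rightarrow (1)$ holds \emph{unconditionally} --- so a complete proof of this statement would immediately resolve the union-closed sets conjecture itself. There is therefore no ``paper proof'' for your argument to match, and any attempt that appears to be succeeding should be treated with extreme suspicion.

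Your attempt has two fatal gaps. First, you assume Conjecture~\ref{con:1}. That is illegitimate: the statement to be proved carries no such hypothesis, and Theorem~\ref{thm:1} is an equivalence, not a standing assumption for the section --- indeed the paper goes out of its way in Proposition~\ref{prop:8} to \emph{remove} Poonen's assumption of Conjecture~\ref{con:1}. At best you would be proving the implication $(1) \Rightarrow (3)$, which is a different (and also unestablished) claim. Second, even granting Conjecture~\ref{con:1}, your argument stops exactly where the problem begins: the ``deadlock'' you identify --- that uniqueness of $\alpha$ forces every $\gamma \in X_S \setminus m_\alpha$ that is scarce in $U$ to be abundant in $T$ and vice versa --- is precisely a restatement of what must be contradicted, and the proposed resolution (induction on $|S|$ via deletion of a basis set, as in Lemma~\ref{lemma:1}) is never carried out; you concede the ``real work'' lies there. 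So the correct setup (the principal-filter description $S_\alpha = \{A \in S : A \supseteq m_\alpha\}$, the count $|S_\gamma| = |U_\gamma| + |T_\gamma|$) is followed by no actual proof. Two smaller errors: your claim that $\gamma$ is scarce in $S$ ``exactly when'' it is scarce in both $U$ and $T$ holds only in one direction (joint scarcity implies $|S_\gamma| \leq \frac{|S|}{2}$, not conversely); and the reduction to $X_S \in S$ is not harmless for \emph{this} statement, since adjoining $X_S$ increases $|S|$ and every $|S_p|$ by one, which shifts the threshold $\frac{|S|}{2}$ and can destroy the unique scarce element (e.g.\ when $|S_\alpha| = \frac{|S|}{2}$ exactly), so the ``exactly one'' hypothesis need not survive the reduction.
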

\begin{con} \label{con:4}
Let $S$ be a separating intersection-closed set with $|S|>1$. If there is exactly one $\alpha \in X_S$ with $|S_{\alpha}| \leq \frac{|S|}{2}$, then $S$ is precisely the collection of all subsets of $X_S$ which don't contain $\alpha$, together with $X_S$ itself. 
\end{con}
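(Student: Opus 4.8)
The plan is to deduce Conjecture~\ref{con:4} from the two conjectures immediately preceding it, treating Conjecture~\ref{con:2} and Conjecture~\ref{con:3} as available hypotheses; since these two are genuine logical ingredients rather than mere restatements, this is almost certainly the implication singled out as the one exception to be discussed. Throughout I take the normalization $\emptyset, X_S \in S$ permitted by the reduction of~\cite{Poonen} quoted above, which is in any case needed for Conjecture~\ref{con:2} to be literally true (it fails, for instance, on the separating family $\{\{p\}\}$). So let $S$ be separating and intersection-closed with $|S| > 1$, and let $\alpha \in X_S$ be the unique element with $|S_\alpha| \leq \frac{|S|}{2}$.

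First I would feed $S$ directly into Conjecture~\ref{con:3}: its hypothesis is exactly that $X_S$ contains a unique below-average element, so it returns that $X_S$ is the only member of $S$ containing $\alpha$. In particular $X_S \in S$, $S_\alpha = \{X_S\}$, and the subfamily $S' \doteq \{A \in S : \alpha \notin A\}$ is precisely $S \setminus \{X_S\}$, which still contains $\emptyset$. The next step is to verify that $S'$ is a legitimate input for Conjecture~\ref{con:2}. Intersection-closure is immediate, since an intersection of sets avoiding $\alpha$ avoids $\alpha$. For the separating property and the ground set I would use that $X_S$ is the \emph{only} set meeting $\alpha$: any $A \in S$ separating two distinct points $p, q \in X_S \setminus \{\alpha\}$ cannot be $X_S$ (which contains both), so $A \in S'$; and each $p \neq \alpha$ must lie in some set other than $X_S$, else $p$ and $\alpha$ would share the membership class $\{X_S\}$, contradicting that $S$ is separating. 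Hence $S'$ separates its points and $X_{S'} = X_S \setminus \{\alpha\}$.

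Now apply Conjecture~\ref{con:2} to $S'$. Either $S'$ is a power set, so that $S' = \mathcal{P}(X_S \setminus \{\alpha\})$ and $S = S' \cup \{X_S\}$ is exactly the family in the conclusion, or there is some $\alpha' \in X_{S'}$ with $|S'_{\alpha'}| < \frac{|S'|}{2}$, which I would rule out by counting. Since $X_S$ is the unique set containing $\alpha$ and $\alpha' \neq \alpha$ lies in $X_S$, restoring $X_S$ gives $S_{\alpha'} = S'_{\alpha'} \cup \{X_S\}$ and $|S| = |S'| + 1$. The strict inequality $2|S'_{\alpha'}| < |S'| = |S| - 1$ forces $2|S'_{\alpha'}| \leq |S| - 2$ over the integers, whence $2|S_{\alpha'}| = 2|S'_{\alpha'}| + 2 \leq |S|$, i.e.\ $|S_{\alpha'}| \leq \frac{|S|}{2}$; as $\alpha' \neq \alpha$ this contradicts the uniqueness of $\alpha$. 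Thus $S'$ is a power set and the conclusion follows.

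The main obstacle is not any individual step but the recognition that the \emph{strictness} of the inequality in Conjecture~\ref{con:2} is exactly what the argument consumes: a merely non-strict bound would yield only $|S_{\alpha'}| \leq \frac{|S|+1}{2}$ after adding $X_S$ back, too weak to contradict uniqueness. The remaining work is bookkeeping, with only the degenerate case $X_S = \{\alpha\}$ (where $S' = \{\emptyset\} = \mathcal{P}(\emptyset)$ is already a power set) to be checked by hand.
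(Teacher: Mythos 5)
Your argument is correct, and it reaches the conclusion by a genuinely different route than the paper does. The paper never proves Conjecture~\ref{con:4} outright (it is, after all, a conjecture); what it proves is Proposition~\ref{prop:8}, the unconditional equivalence of Conjecture~\ref{con:4} with the conjunction of Conjectures~\ref{con:2} and~\ref{con:3}. For the direction you address, namely $(2)\wedge(3)\Rightarrow(4)$, the paper gives no direct argument at all: it observes that $(2)\Rightarrow(1)$ holds unconditionally and that Poonen already proved the equivalence under the assumption of Conjecture~\ref{con:1}, so the implication follows by chaining citations. You instead give a short, self-contained combinatorial derivation: use Conjecture~\ref{con:3} to isolate $X_S$ as the unique member containing $\alpha$, pass to $S'=S\setminus\{X_S\}$, check it is separating and intersection-closed on $X_S\setminus\{\alpha\}$, and then use the \emph{strict} inequality in Conjecture~\ref{con:2} to rule out the non-power-set alternative by a parity count. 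That last observation --- that strictness is exactly what survives the $+1$ from restoring $X_S$ --- is the real content of your argument and is nowhere in the paper. Your handling of the normalization $\emptyset, X_S\in S$ is also right and necessary (the paper adopts this convention globally following Poonen, and Conjecture~\ref{con:4} is simply false without it, e.g.\ for $S=\{\{1\},\{1,2\}\}$). What your approach does not buy is the half of Proposition~\ref{prop:8} that the paper actually works for: the converse $(4)\Rightarrow(2)\wedge(3)$, whose proof rests on the new unconditional implication $(3)\Rightarrow(1)$ established there; but that lies outside the statement you were asked to prove.
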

\noindent We will restate Conjectures~\ref{con:2} and ~\ref{con:3}, then prove that Conjecture~\ref{con:4} holds if and only if Conjectures~\ref{con:2} and \ref{con:3} do. This is Proposition 1 in~\cite{Poonen}, where it is proven under the additional assumption that Conjecture~\ref{con:1} holds, we thus remove that assumption. 
\\
\indent \indent We can remove the separating hypothesis in Conjecture~\ref{con:2} by replacing ``power set" with ``$\sigma$-algebra" (more specifically, the remark preceding Conjecture~\ref{con:2} in~\cite{Poonen} concerning the replacement of elements with blocks in power sets is a construction that yields precisely the $\sigma$-algebras contained in a given (finite) power set. More formally, every $\sigma$-algebra on a finite set is generated by a partition, see~\cite{Tao}, for instance).  The advantage to this is twofold. Firstly, $\sigma$-algebras correspond exactly to subrings of $\mathbb{F}_2^n$. Secondly, the separating hypothesis seems cumbersome to translate, if we wish to present this in a form in which it could be considered for other rings, it is in our best interest to remove it. The new version of Conjecture~\ref{con:2} reads:
\begin{con2}
If $M \subseteq \mathbb{F}_2^n$ is an extended submonoid that is not a subring of $\mathbb{F}_2^n$, then $M$ has a maximal covering ideal with an associated submonoid that is properly contained in $M$. 
\end{con2}
\indent \indent Conjecture~\ref{con:3}, when restated, is a statement about the relationship between uniqueness of maximal covering ideals and uniqueness of associated submonoids. Fixing an extended submonoid $M \subseteq \mathbb{F}_2^n$, say an extended submonoid $M'$ contained in $M$ is \emph{covered} if it is associated to some proper, nonzero covering ideal of $M$. The new version of Conjecture~\ref{con:3} can then be stated as:
\begin{con3}
If $M \subseteq \mathbb{F}_2^n$ is an extended submonoid with a unique maximal covering ideal, then $\mathbb{F}_2$ is the only covered submonoid contained in $M$. 
\end{con3}
\noindent Finally, for Conjecture~\ref{con:4} we have the following result:
\begin{prop} \label{prop:8} 
Conjecture~\ref{con:4} holds if and only if Conjectures~\ref{con:2} and~\ref{con:3} both do.
\begin{proof}
It is known that this holds under the assumption of Conjecture~\ref{con:1}. Moreover, the implications $(4) \Rightarrow (3)$ and $(2) \Rightarrow (1)$ hold unconditionally. In particular, the backwards direction of the claim holds unconditionally, so it suffices to prove the forward direction. For this, it is enough to prove the implication $(3) \Rightarrow (1)$. Indeed, if this holds, then $(4) \Rightarrow (3) \Rightarrow (1)$ holds unconditionally, thus the forward direction also holds unconditionally since it holds under the assumption of $(1)$.
\\
\indent \indent Supposing $(3)$ holds, let $S$ be intersection-closed with $\emptyset, X_S \in S$ and $X_S \neq \emptyset$. Pick any $\alpha \notin X_S$ and consider the intersection-closed set $\tilde{S}= S \cup \left\{ \left\{\alpha \right\}, X_S \cup \left\{ \alpha \right\} \right\}$. Immediately, then $|\tilde{S}_{\alpha}| \leq \frac{|\tilde{S}|}{2}$. If $\alpha$ were the only such element of $X_{\tilde{S}}= X_S \cup \left\{ \alpha \right\}$, then Conjecture~\ref{con:3} would demand that $X_{\tilde{S}}$ be the only set in $\tilde{S}$ containing $\alpha$, but it certainly is not. Then there is some $\beta \neq \alpha$, $\beta \in X_{\tilde{S}}$ with $|\tilde{S}_{\beta}| \leq \frac{|\tilde{S}|}{2}$. It follows that, since $\beta$ is contained in exactly one member-set of $\tilde{S}\setminus S$, we have $|S_{\beta}| \leq \frac{|S|}{2}$. Then $(1)$ holds and we are finished.
\end{proof}
\end{prop}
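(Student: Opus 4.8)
The plan is to leverage the conditional equivalence already available from Poonen (namely that $(2)\wedge(3) \iff (4)$ holds under the assumption of Conjecture~\ref{con:1}) and to \emph{bootstrap away} that hypothesis by showing that each side of the desired equivalence already forces Conjecture~\ref{con:1}. First I would catalogue the implications that are routine and visibly unconditional. The structural conclusion of Conjecture~\ref{con:4} exhibits $X_S$ as the unique member containing $\alpha$, which is exactly the conclusion of Conjecture~\ref{con:3}, so $(4) \Rightarrow (3)$; and since a power set (equivalently, a $\sigma$-algebra) distributes each point evenly among its members, Conjecture~\ref{con:2} produces a point in at most half the sets, so $(2) \Rightarrow (1)$. (The mismatch in the separating hypotheses is absorbed by the standard block/separating correspondence recalled in the footnote.) Granting these, the backward direction is immediate: assuming $(2)\wedge(3)$, the implication $(2)\Rightarrow(1)$ supplies Conjecture~\ref{con:1}, whereupon the conditional equivalence yields $(4)$. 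For the forward direction, assuming $(4)$ we already get $(3)$ for free, so the whole problem collapses to the single implication $(3) \Rightarrow (1)$: once we have it, $(4) \Rightarrow (3) \Rightarrow (1)$ again supplies Conjecture~\ref{con:1}, and the conditional equivalence delivers $(2)\wedge(3)$.

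Thus the heart of the matter is proving $(3) \Rightarrow (1)$. Here my strategy is to manufacture, from an arbitrary intersection-closed $S$, an auxiliary system in which one designated point is artificially starved of membership, and then run Conjecture~\ref{con:3} contrapositively. Concretely, using the standing reduction that it suffices to treat $S$ with $\emptyset, X_S \in S$ (and $X_S \neq \emptyset$, since $|S|>1$), I would adjoin a fresh point $\alpha \notin X_S$ and form $\tilde{S} = S \cup \{\{\alpha\}, X_S \cup \{\alpha\}\}$. I would then check that $\tilde{S}$ is again intersection-closed, that $X_{\tilde S} = X_S \cup \{\alpha\}$, and that exactly the two new sets contain $\alpha$, so $|\tilde S_\alpha| = 2 \le |\tilde S|/2$ precisely because $|S| \ge 2$. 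Now apply $(3)$: if $\alpha$ were the unique point of $X_{\tilde S}$ with small membership, Conjecture~\ref{con:3} would force $X_{\tilde S}$ to be the only set containing $\alpha$, contradicting $\{\alpha\} \in \tilde S$. Hence some $\beta \neq \alpha$ also satisfies $|\tilde S_\beta| \le |\tilde S|/2$, and transferring this bound back to $S$ should yield the point required by Conjecture~\ref{con:1}.

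The main obstacle I anticipate is the cardinality bookkeeping in this transfer. The delicate point is that the inequality for $\beta$ must survive the passage from $\tilde S$ to $S$: since $\beta \in X_S$ lies in the added set $X_S \cup \{\alpha\}$ but not in $\{\alpha\}$, it belongs to exactly one of the two new sets, so $|\tilde S_\beta| = |S_\beta| + 1$ while $|\tilde S| = |S| + 2$; feeding these into $|\tilde S_\beta| \le |\tilde S|/2$ should give exactly $|S_\beta| \le |S|/2$. Getting this count to land on the nose, rather than off by one, is what makes the choice of precisely the two sets $\{\alpha\}$ and $X_S \cup \{\alpha\}$ essential (a different augmentation would shift the balance), and it is the step I would verify most carefully, together with confirming that the enlarged system is genuinely intersection-closed.
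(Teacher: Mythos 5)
Your proposal is correct and follows essentially the same route as the paper: the same reduction of everything to the single implication $(3) \Rightarrow (1)$ via the conditional Poonen equivalence and the unconditional implications $(4)\Rightarrow(3)$, $(2)\Rightarrow(1)$, and the same augmentation $\tilde S = S \cup \{\{\alpha\}, X_S \cup \{\alpha\}\}$ with the identical cardinality transfer. The only difference is that you spell out the arithmetic $|\tilde S_\beta| = |S_\beta|+1$, $|\tilde S| = |S|+2$ that the paper leaves implicit.
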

\section{The automorphic multiaction} \label{auto}
\noindent The purpose of this section is to provide a context in which monoidal networks occasionally occur that is not directly related to Conjecture~\ref{con:1}, in turn motivating the abstract definition of these objects. More specifically, we briefly investigate $\Gamma^{\aut} (R)$, thereby producing examples in which this multiaction forms a monoidal network. This is quite important, as the condition that $\Gamma^{\aut}(R)$ form a monoidal network is very stringent and it is not immediately clear that nontrivial examples of this actually exist. We first recall that a ring of prime characteristic is called \emph{perfect} (in the sense of Serre~\cite{Serre}) if the Frobenius endomorphism on $R$ is an automorphism. The first such example comes easily when $R$ is a perfect ring with a certain property.
\begin{prop} \label{prop:7}
Let $R$ be a perfect ring of characteristic $p>0$. Suppose $R$ is not a field and additionally that the Frobenius on $R$ has finite order. Then $\Gamma^{\aut} (R)$ is a monoidal network.
\end{prop}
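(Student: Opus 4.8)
The plan is to show that \emph{every} extended submonoid $M \subseteq R$ admits a proper, nonzero covering ideal under $\Gamma^{\aut}(R)$, which is precisely the monoidal network condition. The guiding observation is that the finite-order hypothesis on the Frobenius is extremely restrictive: writing $F$ for the Frobenius and supposing $F^k = \id$, we have $x^{p^k} = x$ for every $x \in R$. I would use this to force the residue fields of $R$ to be finite with automorphism groups generated by Frobenius, and then exploit that the global Frobenius $F$ is itself an automorphism of $R$ lying in every relevant stabilizer.

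First I would produce the covering ideal. Since $R$ is not a field, the zero ideal is not maximal, so any maximal ideal $\mathfrak{m}$ of $R$ (which exists by Zorn's lemma) is automatically proper and nonzero. The residue field $R/\mathfrak{m}$ inherits the identity $t^{p^k} = t$, hence is a finite field $R/\mathfrak{m} \cong \mathbb{F}_{p^d}$ with $d \mid k$, and its ring-automorphism group is $\gal(\mathbb{F}_{p^d}/\mathbb{F}_p)$, cyclic of order $d$ and generated by the residual Frobenius $\bar{F}\colon \bar{x} \mapsto \bar{x}^p$. The crucial compatibility is that $\bar{F}$ is the reduction of the global Frobenius: writing $\pi\colon R \to R/\mathfrak{m}$ for the projection, naturality of Frobenius gives $\pi \circ F = \bar{F} \circ \pi$.

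Next I would verify that $\mathfrak{m}$ is a covering ideal of $M$ with associated submonoid $\hat{M} = M$. The containment $M \setminus \hat{M} = \emptyset \subseteq \mathfrak{m}$ is immediate, and $\hat{M} = M$ is an extended submonoid by hypothesis. Any $g \in (\Gamma_{\mathfrak{m}}^{\aut})^{\pi(M)}$ is a ring automorphism of $R/\mathfrak{m}$, hence equals $\bar{F}^{\,j}$ for some $j$; I would take as its lift $\tilde{g} = F^{\,j} \in \aut(R) = \Gamma_0^{\aut}$. The required square commutes on all of $R$, since $\pi \circ F^{\,j} = \bar{F}^{\,j} \circ \pi = g \circ \pi$. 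It remains to check $\tilde{g} \in (\Gamma_0^{\aut})^{M}$, that is, that $F^{\,j}$ stabilizes $M$ setwise. Because each power of $F$ is a ring endomorphism and $M$ is multiplicatively closed, $F^{\,i}(M) \subseteq M$ for every $i$; applying this to $i = j$ and to $i = -j$ (a genuine power of $F$, as $F$ has finite order) yields $F^{\,j}(M) = M$. This exhibits the lift, so $\mathfrak{m}$ is a covering ideal of $M$ and the proof concludes.

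The substance of the argument is concentrated in the structural step rather than in any computation: the main point is that the finite-order Frobenius hypothesis simultaneously makes every residue field finite with cyclic automorphism group generated by Frobenius, \emph{and} supplies a single global automorphism $F$ whose powers realize all those residual automorphisms while preserving every multiplicative submonoid. Once this is recognized, the lifting condition holds essentially for free, with no need to tailor $\hat{M}$ to $M$ (one may take $\hat{M} = M$). The only place demanding a little care is confirming that the lift stabilizes $M$, which is exactly where multiplicative closure together with the finiteness of the order of $F$ enters.
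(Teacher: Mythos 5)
Your argument is correct and matches the paper's proof in all essentials: both fix a maximal ideal (proper and nonzero since $R$ is not a field), use the finite order of the Frobenius to conclude the residue field is finite with automorphism group generated by the residual Frobenius, take $\hat{M}=M$, and lift $\bar{F}^{\,j}$ to $F^{\,j}$, which stabilizes $M$ because $F(M)\subseteq M$ and $F^{-1}$ is a positive power of $F$. No meaningful differences.
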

\begin{proof}
Let $\gamma: x \mapsto x^p$ be the Frobenius on $R$. Our first observation is that the finite order of $\gamma$ implies that any residue field of $R$ is necessarily finite. Indeed, let $I \subseteq R$ be maximal and let $k>0$ be the order of $\gamma$. If $\phi : R/I \to R/I$ is the Frobenius endomorphism on $R/I$, then the following diagram commutes:
\begin{center}
\begin{tikzcd}
R \arrow[d, "\pi"] \arrow[rr, "\gamma^r"] &  & R \arrow[d, "\pi"] \\
R/I \arrow[rr, "\phi^r"] &  & R/I
\end{tikzcd}
\end{center}
for any $r>0$. In particular, setting $r=k$ gives that $x^{p^k}=x$ for any $x \in R/I$, it follows that $R/I$ is finite.
\\
\indent \indent Now fix a maximal ideal $I \subseteq R$. Since $R$ is not a field, $I$ is proper and nonzero and by the above we may write $R/I \cong \mathbb{F}_{p^n}$ for some $n \geq 1$. Let $M \subseteq R$ be an extended submonoid and let $t \in (\Gamma_I^{\aut})^{\pi (M)}$. Then $t \in \aut (R/I)= \gal (\mathbb{F}_{p^n}/\mathbb{F}_p)$, so in fact $t= \phi^m$ for some $m \geq 0$. Then $\pi \gamma^m=t\pi$ on $M$, so it suffices to show that $\gamma^m \in (\Gamma_0^{\aut})^M$, but this is immediate since $\gamma (M) \subseteq M$ and $\gamma^{-1}=\gamma^{k-1}$. 
\end{proof}
\indent \indent Explicitly, then, take $S$ to be some set with $|S| \geq 2$ and for each $i \in S$, let $F_i$ be a finite extension of $\mathbb{F}_p$. If there is some $k>0$ such that $[F_i : \mathbb{F}_p] \leq k$ for every $i$, then the automorphic multiaction on the product $\displaystyle \prod_{i \in S} F_i$ is a monoidal network by Proposition~\ref{prop:7}. Another class of examples is furnished by the backwards direction of Proposition~\ref{prop:1}: if $R/I$ is rigid for a proper, nonzero ideal $I$, then $\Gamma^{\aut}(R)$ is a (trivial) monoidal network. We thus seek examples not of the types mentioned above, which were relatively easy to find. A convenient setting for this is the class of finite local rings, for reasons which will soon become clear. Upon examining rings of the form $\mathbb{F}_{p^n}[x]/(x^2)$, we obtain the following theorem:
\begin{thm} \label{thm:2}
For any prime $p$, $\Gamma^{\aut} (\mathbb{F}_{p^n}[x]/(x^2))$ is a monoidal network if and only if $n \leq 3$. 
\end{thm}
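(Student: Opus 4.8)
The plan is to convert the monoidal-network condition into a concrete statement about $\mathbb{F}_p$-subspaces of $\mathbb{F}_{p^n}$ and the Frobenius, and then settle that statement. First I would fix the algebra of $R=\mathbb{F}_{p^n}[x]/(x^2)$. Its only ideals are $0,(x),R$, with $R/(x)\cong\mathbb{F}_{p^n}$, and the set $\{y\in R: y^{p^n}=y\}=\mathbb{F}_{p^n}$ is intrinsic, so every automorphism preserves it; a direct computation then gives $\aut(R)=\{\sigma_{\tau,\beta}:a+bx\mapsto \tau(a)+\beta\tau(b)x\}\cong \mathbb{F}_{p^n}^\times\rtimes\gal(\mathbb{F}_{p^n}/\mathbb{F}_p)$ and $R^{\aut(R)}=\mathbb{F}_p$. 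Since $(x)$ is the unique proper nonzero ideal, every covering ideal must be $(x)$, with $\Gamma^{\aut}_{(x)}=\gal(\mathbb{F}_{p^n}/\mathbb{F}_p)$ and top action group $\Gamma^{\aut}_0=\aut(R)$.

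Next I would classify extended submonoids. As $R^{\aut(R)}=\mathbb{F}_p$, an extended submonoid is simply a multiplicatively closed $M\ni\mathbb{F}_p$. Its units $G=M\cap R^\times$ form a subgroup of $R^\times\cong\mathbb{F}_{p^n}^\times\times(\mathbb{F}_{p^n},+)$, and since the two factors have coprime orders, $G=H\times V$ with $\mathbb{F}_p^\times\le H\le\mathbb{F}_{p^n}^\times$ and $V\le\mathbb{F}_{p^n}$ an $\mathbb{F}_p$-subspace, while the non-units make up $\{bx:b\in N\}$ for an $H$-invariant set $N$. Taking $\hat M=G\cup\{0\}$ (forced on the units, smallest possible on the non-units), a lift $\sigma_{\tau,\beta}$ of $\tau\in\gal(\mathbb{F}_{p^n}/\mathbb{F}_p)$ stabilizes $\hat M$ exactly when $\beta\tau(V)=V$. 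The worst case $H=\mathbb{F}_{p^n}^\times$ forces every $\tau$ to be lifted with no freedom in $\tau$, whence I would prove the equivalence
\[
\Gamma^{\aut}(R)\text{ is a monoidal network}\iff (\star):\ \forall\,V\le\mathbb{F}_{p^n}\ \forall\,\tau\in\gal(\mathbb{F}_{p^n}/\mathbb{F}_p)\ \exists\,\beta\in\mathbb{F}_{p^n}^\times:\ \beta\tau(V)=V.
\]
In words, $(\star)$ says every orbit of subspaces under scaling by $\mathbb{F}_{p^n}^\times$ is stable under Frobenius; and since a solution for $\tau=\phi:a\mapsto a^p$ propagates to all powers, it is enough to treat $\tau=\phi$, i.e. to ask whether $V^{[p]}:=\{v^p:v\in V\}$ always lies in $\mathbb{F}_{p^n}^\times\!\cdot V$.

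For $n\le 3$ this is immediate: for each $d$, scaling is transitive on the $d$-dimensional subspaces (lines form one orbit, and dually so do hyperplanes), and when $n\le 3$ every dimension is $0$, $1$, $n-1$, or $n$; hence $V^{[p]}$ and $V$ automatically share an orbit and $(\star)$ holds. The substance is the converse. The key lemma I would establish is that for $V=\langle 1,t\rangle$ with $t\notin\mathbb{F}_p$, one has $V^{[p]}\sim V$ if and only if $1,t,t^p,t^{p+1}$ are $\mathbb{F}_p$-linearly dependent (expand $\beta,\beta t^p\in\langle 1,t\rangle$ and note a dependence cannot avoid the $t^p,t^{p+1}$ terms). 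Thus $(\star)$ fails precisely when some $t$ makes $1,t,t^p,t^{p+1}$ independent, which already requires $n\ge 4$.

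The hard part—and where I expect nearly all the effort to go—is showing such a $t$ exists for every $n\ge 4$ and every prime $p$, uniformly. When $p\le n-2$, a generator $t$ of $\mathbb{F}_{p^n}$ works at once, since $1,t,t^p,t^{p+1}$ occupy distinct slots of the power basis. For $n\ge 5$ with $p\ge n-1$ the bad $t$ are roots of $O(p^3)$ polynomials of degree $\le p+1$, a total well below $p^n$, so a good $t$ remains. The delicate residual case $n=4$ with $p$ odd I would handle by an explicit construction inside $\mathbb{F}_{p^4}=\mathbb{F}_{p^2}(s)$, choosing $s$ so that $1,s,s^p,s^{p+1}$ split across the two $\mathbb{F}_{p^2}$-components and are therefore independent. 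Assembling these cases produces an $M$ with no covering ideal whenever $n\ge 4$, completing the equivalence; the structural reduction to $(\star)$ is routine by comparison, so it is this construction of a Frobenius-unstable subspace across all $(p,n)$ that forms the main obstacle.
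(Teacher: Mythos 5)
Your structural reduction coincides with the paper's: both arrive at the criterion that $\Gamma^{\aut}(R_{n,p})$ is a monoidal network iff every $\mathbb{F}_p$-subspace $V\subseteq\mathbb{F}_{p^n}$ satisfies $\phi(V)=\lambda V$ for some $\lambda\in\mathbb{F}_{p^n}^\times$ (the paper's Lemmas~2 and~3, obtained there via the subgroups $H\times(\mathbb{Z}/p\mathbb{Z})^r$ of $R_{n,p}^\times$, essentially your $H\times V$ decomposition), and both then specialize to $V=\langle 1,t\rangle$ to get the key criterion that $1,t,t^p,t^{p+1}$ be $\mathbb{F}_p$-dependent for every $t$. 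You genuinely diverge in the two verifications. For $n\le 3$ your appeal to transitivity of scaling on lines and on hyperplanes is valid (if $\lambda V=V$ then $V$ is a vector space over $\mathbb{F}_p(\lambda)$, whose degree divides both $n$ and $\dim V=n-1$, so the stabilizer of a hyperplane is exactly $\mathbb{F}_p^\times$ and the orbit count matches the hyperplane count) and is cleaner than the paper's explicit minimal-polynomial computation for $n=3$. For $n=4$ with $p$ odd --- where the paper spends most of its effort, via elements of order $p^2+1$, the Rosser--Schoenfeld lower bound on $\varphi$, and hand checks at $p=3,5$ --- your construction is the right idea but, as written, only a slogan: lying in different $\mathbb{F}_{p^2}$-components does not by itself give $\mathbb{F}_p$-independence, and the choice of $s$ is where the content lives. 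It can be completed: take $s^2=\delta$ with $\delta$ a generator of $\mathbb{F}_{p^2}^\times$ (a non-square, so $s$ has degree $4$); then $s^p=\delta^{(p-1)/2}s$ and $s^{p+1}=\delta^{(p+1)/2}$, so a relation $a+bs+cs^p+ds^{p+1}=0$ splits into $a+d\delta^{(p+1)/2}=0$ and $b+c\delta^{(p-1)/2}=0$, and both vanish identically because $\bigl(\delta^{(p+1)/2}\bigr)^{p-1}=\delta^{(p^2-1)/2}=-1\ne 1$ and $\bigl(\delta^{(p-1)/2}\bigr)^{p-1}=\delta^{(p-1)^2/2}\ne 1$ (the exponent is strictly between $0$ and $p^2-1$), so neither power of $\delta$ lies in $\mathbb{F}_p$. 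With that supplement your route is correct and removes the analytic input entirely; what the paper's route buys instead is a uniform argument requiring no choice of quadratic generator, at the price of the totient estimate and two residual manual cases. Your remaining counting for $n\ge 5$, $p\ge n-1$ and the power-basis trick for $p\le n-2$ both check out and together cover all $(p,n)$ with $n\ge 5$, as does the paper's single projective count giving $p^n\le(p(p+1))^2$ for $p>2$.
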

\indent \indent Theorem~\ref{thm:2} makes precise the remarkable fact that deciding whether the automorphic multiaction on $\mathbb{F}_{p^n}[x]/(x^2)$ is a monoidal network is purely a question of determining the residue field extension degree $n$. This will require some effort to prove, the case $n=4$ being noticeably harder than the others. Before proceeding, we remark that Theorem~\ref{thm:2} does give examples of $R$ for which $\Gamma^{\aut} (R)$ is a monoidal network that are not of the previously mentioned types. These rings are certainly not perfect, and for $n>1$ the corresponding multiactions are nontrivial. 
\\
\indent \indent There is a general situation that will need to be dealt with repeatedly in order to prove Theorem~\ref{thm:2}, we therefore establish a piece of terminology for this. Suppose $(G, \gamma)$ is an action on some set $X$, and let $N,H$ be subgroups of $G$ with $N$ normal in $G$. There is then a unique induced action of $H$ on $X/N$ which completes the following diagram for any $h \in H$:
\begin{center}
\begin{tikzcd}
X \arrow[rr, "h"] \arrow[d, "\theta"] &  & X \arrow[d, "\theta"] \\
X/N \arrow[rr, "h", dashed]           &  & X/N                  
\end{tikzcd}
\end{center}
It is stressed that this action is well-defined only because $N$ is normal in $G$. We say $N$ \emph{trivializes} $H$, or is a \emph{trivialization} of $H$, under $(G, \gamma)$ if this induced action is trivial. Another way of saying this is that, under the restriction of $(G, \gamma)$ to $H$, we have $H^{\theta (x)} = H$ for any $x \in X$. Thus, if $N$ trivializes $H$, then the action of $H$ on $X$ is ``broken up" into separate actions on each member-set of the partition $\left\{ \theta (x): x \in X \right\}$. 
\\
\indent \indent Moving towards a proof of Theorem~\ref{thm:2}, henceforth we let $R_{n,p}\doteq \mathbb{F}_{p^n}[x]/(x^2)$, for brevity, and we let $\varepsilon$ be the image of $x \in \mathbb{F}_{p^n}[x]$ in $R_{n,p}$ under projection. Then the set $\left\{ 1, \varepsilon \right\}$ is an $\mathbb{F}_{p^n}$-basis for $R_{n,p}$. We define, for $\lambda \in \mathbb{F}_{p^n}^\times$, $T_\lambda : a+b\varepsilon \mapsto a+b \lambda \varepsilon$, this is a ring automorphism of $R_{n,p}$. Similarly, we let $\sigma: a+b\varepsilon \mapsto a^p+b^p \varepsilon$, which is also a ring automorphism on $R_{n,p}$. We identify the subgroup of $\aut (R_{n,p})$ consisting of the $T_\lambda$ with $\mathbb{F}_{p^n}^\times$ and the subgroup generated by $\sigma$ with $\gal (\mathbb{F}_{p^n}/\mathbb{F}_p)$. Standard arguments then give that $\aut (R_{n,p})= \mathbb{F}_{p^n}^\times \rtimes \gal (\mathbb{F}_{p^n}/\mathbb{F}_p)$ (i.e. $\aut (R_{n,p})$ is the so-called general semilinear group in one dimension over $\mathbb{F}_{p^n}$, as defined in~\cite{Gruenberg}), and that $R_{n,p}^{\aut (R_{n,p})}= \mathbb{F}_p$. We have the following lemma:
\begin{lemma} \label{lemma:2}
For $0 \leq r \leq n$, let $S_r = \left\{ H \leq R_{n,p}^\times : \mathbb{F}_{p^n}^\times \leq H, |H|=p^r (p^n-1) \right\}$. There is a natural action of $\aut (R_{n,p})$ on $S_r$, and $\Gamma^{\aut} (R_{n,p})$ is a monoidal network if and only if $\mathbb{F}_{p^n}^\times$ trivializes $\gal (\mathbb{F}_{p^n}/\mathbb{F}_p)$ under this action for each $r$. 
\begin{proof}
We argue for the forward direction first. Suppose $\Gamma^{\aut} (R_{n,p})$ is a monoidal network and let $H \in S_r$. Then we have $\mathbb{F}_p \subseteq H \cup \left\{0 \right\} \doteq H_0$, so $H_0$ is an extended submonoid of $R_{n,p}$. The only proper, nonzero ideal of $R_{n,p}$ is $I= (\varepsilon )$, so this must be a covering ideal of $H_0$. Moreover, any nonzero element of $H_0$ is necessarily a unit, so the condition $H_0 \setminus \hat{H_0} \subseteq I$ implies the extended submonoid associated to $I$ is $\hat{H_0}=H_0$. Now, by definition of $S_r$, $\mathbb{F}_{p^n} \subseteq H_0$, so that $\pi (H_0)= R_{n,p}/I \cong \mathbb{F}_{p^n}$. Letting $\phi$ be the Frobenius on $R_{n,p}/I$, we have $\phi \in (\Gamma_I^{\aut} )^{\pi (H_0)}$, so there is some $\tilde{\phi} \in (\Gamma_0^{\aut})^{H_0}$ making the following commute:
\begin{center}
\begin{tikzcd}
H_0 \arrow[d, "\pi"] \arrow[rr, "\tilde{\phi}"] &  & H_0 \arrow[d, "\pi"] \\
\mathbb{F}_{p^n} \arrow[rr, "\phi"] &  & \mathbb{F}_{p^n}
\end{tikzcd}
\end{center}
The decomposition $\aut (R_{n,p})= \mathbb{F}_{p^n}^\times \rtimes \gal (\mathbb{F}_{p^n}/\mathbb{F}_p)$ allows us to write $\tilde{\phi}= T_\lambda \sigma^k$ for some $0 \leq k < n$ and $\lambda \in \mathbb{F}_{p^n}^\times$, and the above diagram, together with the fact that $\mathbb{F}_{p^n} \subseteq H_0$, implies that $k=1$, thus $\tilde{\phi}=T_\lambda \sigma$. In particular, we have $T_\lambda \sigma \in \aut (R_{n,p})^{H_0}$ and since this automorphism fixes $0$, actually we have $T_\lambda \sigma \in \aut (R_{n,p})^H$. Then $\sigma (H)= T_{\lambda^{-1}}(H)$, and it follows that if $\theta : S_r \to S_r/ \mathbb{F}_{p^n}^{\times}$ is the projection, $\theta (\sigma (H))= \theta (H)$. Since the copy of $\gal ( \mathbb{F}_{p^n}/ \mathbb{F}_p)$ in $\aut (R_{n,p})$ is generated by $\sigma$, it follows that the induced action of $\gal ( \mathbb{F}_{p^n}/ \mathbb{F}_p)$ on $S_r/ \mathbb{F}_{p^n}^{\times}$ is trivial. This proves the forward direction. 
\\
\indent \indent For the backwards direction, suppose $\mathbb{F}_{p^n}^\times$ trivializes $\gal (\mathbb{F}_{p^n}/\mathbb{F}_p)$ under the action of $\aut (R_{n,p})$ on $S_r$. Let $M \subseteq R_{n,p}$ be an extended submonoid. Since $I$ is maximal, $I^c \cap M$ is closed under multiplication with $\mathbb{F}_p^\times \subseteq I^c \cap M$. Thus $\hat{M} \doteq (I^c \cap M) \cup \left\{0 \right\} \subseteq M$ is an extended submonoid with $M \setminus \hat{M} \subseteq I$. We wish to show that $I$ covers $M$ with associated submonoid $\hat{M}$, for this it suffices to show that, if $G= I^c \cap M$, then any $\gamma \in (\Gamma_I^{\aut})^{\pi (G)}$ has a lift across $I$ lying in $(\Gamma_0^{\aut})^G$. 
\\
\indent \indent Note that $G$ is actually a subgroup of $R_{n,p}^\times$, since $R_{n,p}$ is finite and local. The decomposition $R_{n,p}^\times \cong \mathbb{F}_{p^n}^\times \times (I+1)$ is immediate. Moreover, any $1 \neq x \in I+1$ has order $p$ in $R_{n,p}^\times$, it follows that $R_{n,p}^\times \cong \mathbb{F}_{p^n}^\times \times (\mathbb{Z}/p\mathbb{Z})^n$. Since $\gcd (|\mathbb{F}_{p^n}^\times|,p)=1$, we may then write $G=H \times (\mathbb{Z}/p\mathbb{Z})^r$ for some $H \leq \mathbb{F}_{p^n}^\times$ and $0 \leq r \leq n$. Consider then the larger subgroup $\tilde{G}= \mathbb{F}_{p^n}^\times \times (\mathbb{Z}/p\mathbb{Z})^r \supseteq G$. We have $\tilde{G} \in S_r$, and by hypothesis the induced action of $\gal( \mathbb{F}_{p^n}/ \mathbb{F}_p)$ on $S_r/ \mathbb{F}_p^\times$ is trivial. Then $\theta (\sigma \tilde{G})= \theta (\tilde{G})$, so there is a $\lambda \in \mathbb{F}_{p^n}^\times$ such that $T_\lambda \sigma (\tilde{G})= \tilde{G}$. 
\\ 
\indent \indent Now we have an induced group automorphism $T_\lambda \sigma : \tilde{G} \to \tilde{G}$, this will necessarily factor as $T_\lambda \sigma = \psi \times \alpha$ for group automorphisms $\psi : \mathbb{F}_{p^n}^\times \to \mathbb{F}_{p^n}^\times$ and $\alpha: (\mathbb{Z}/p\mathbb{Z})^r \to (\mathbb{Z}/p\mathbb{Z})^r$. But $\mathbb{F}_{p^n}^\times$ is a cyclic group, so $H$ is actually a characteristic subgroup of $\mathbb{F}_{p^n}^\times$. In particular, $\psi (H)=H$, from which it follows that $T_\lambda \sigma (G)= (\psi \times \alpha) (H \times (\mathbb{Z}/p\mathbb{Z})^r)= H \times (\mathbb{Z}/p\mathbb{Z})^r=G$. Then we have that $T_\lambda \sigma \in (\Gamma_0^{\aut})^G$. 
\\
\indent \indent The desired conclusion is now a short step away. If $\phi$ is the Frobenius on $R_{n,p}/I \cong \mathbb{F}_{p^n}$, then the following commutes:
\begin{center}
\begin{tikzcd}
G \arrow[d, "\pi"] \arrow[rr, "T_\lambda \sigma"] &  & G \arrow[d, "\pi"] \\
\pi(G) \arrow[rr, "\phi"] &  & \pi(G)
\end{tikzcd}
\end{center}
so that $T_\lambda \sigma \in (\Gamma_0^{\aut})^G$ gives a lift along $I$ of $\phi$. It then follows that any $\gamma \in (\Gamma_I^{\aut})^{\pi (G)}= \aut (R_{n,p}/I)= \gal (\mathbb{F}_{p^n}/\mathbb{F}_p)$ has a lift along $I$, since $\phi$ generates $\gal (\mathbb{F}_{p^n}/\mathbb{F}_p)$.
\end{proof}
\end{lemma}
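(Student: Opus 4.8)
The plan is to prove both directions of the equivalence by exploiting the fact that $R_{n,p}$ is local with a unique proper nonzero ideal $I = (\varepsilon)$ (ideals of $R_{n,p}$ correspond to divisors of $x^2$ in the PID $\mathbb{F}_{p^n}[x]$), so that ``$\Gamma^{\aut}(R_{n,p})$ is a monoidal network'' collapses to the single assertion that $I$ is a covering ideal of every extended submonoid. Throughout I would use the decomposition $\aut(R_{n,p}) = \mathbb{F}_{p^n}^\times \rtimes \gal(\mathbb{F}_{p^n}/\mathbb{F}_p)$, under which the $T_\lambda$ form the normal subgroup $\mathbb{F}_{p^n}^\times$ and $\gal$ is generated by $\sigma$, together with the observation that every automorphism fixes $\mathbb{F}_{p^n}^\times$ setwise and preserves cardinality, which is exactly what makes the action on each $S_r$ well-defined and $\mathbb{F}_{p^n}^\times$ a legitimate candidate trivializer. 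The residue map $\pi : R_{n,p} \to R_{n,p}/I \cong \mathbb{F}_{p^n}$ sends $T_\lambda\sigma^k$ to the field automorphism $a \mapsto a^{p^k}$, and this compatibility is the bridge between lifts across $I$ and elements of $\aut(R_{n,p})$.

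For the forward direction, given $K \in S_r$ I would form the extended submonoid $K_0 = K \cup \{0\}$, which contains $\mathbb{F}_p$ because $\mathbb{F}_{p^n}^\times \leq K$. A covering ideal of $K_0$ must be $I$, and since every nonzero element of $K_0$ is a unit while $I$ contains no units, the associated submonoid is forced to equal all of $K_0$. As $\mathbb{F}_{p^n}^\times \leq K$ forces $\pi(K_0) = \mathbb{F}_{p^n}$, the Frobenius $\phi$ lies in $(\Gamma_I^{\aut})^{\pi(K_0)}$ and hence admits a lift $\tilde\phi \in \aut(R_{n,p})^{K_0}$. Writing $\tilde\phi = T_\lambda\sigma^k$ and reading off the induced map on the residue field forces $k = 1$, so $T_\lambda\sigma$ stabilizes $K_0$ and therefore $K$ (since it fixes $0$); this gives $\sigma(K) = T_{\lambda^{-1}}(K)$, i.e. $\theta(\sigma(K)) = \theta(K)$, which is precisely the trivialization statement for $S_r$ because $\sigma$ generates $\gal$.

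For the backward direction, given an arbitrary extended submonoid $M$ I would set $\hat M = (I^c \cap M) \cup \{0\}$ and check it is an extended submonoid with $M \setminus \hat M \subseteq I$; the task is then to lift every element of $(\Gamma_I^{\aut})^{\pi(\hat M)}$. Since $R_{n,p}$ is finite and local, $G = I^c \cap M$ is a subgroup of $R_{n,p}^\times \cong \mathbb{F}_{p^n}^\times \times (\mathbb{Z}/p\mathbb{Z})^n$, and coprimality of $p$ with $p^n - 1$ splits it as $G = H \times (\mathbb{Z}/p\mathbb{Z})^r$ with $H \leq \mathbb{F}_{p^n}^\times$. I would enlarge to $\tilde G = \mathbb{F}_{p^n}^\times \times (\mathbb{Z}/p\mathbb{Z})^r \in S_r$, invoke the trivialization hypothesis to produce $\lambda$ with $T_\lambda\sigma(\tilde G) = \tilde G$, and then descend this stabilization back to $G$. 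Because $T_\lambda$ and $\sigma$ both respect the decomposition $\mathbb{F}_{p^n}^\times \times (1 + I)$, the induced automorphism of $\tilde G$ factors as $\psi \times \alpha$, and since $\mathbb{F}_{p^n}^\times$ is cyclic the subgroup $H$ is characteristic, giving $\psi(H) = H$ and hence $T_\lambda\sigma(G) = G$. This places $T_\lambda\sigma$ in $\aut(R_{n,p})^{\hat M}$ as a lift of $\phi$; the same characteristic-subgroup observation shows all of $\gal$ stabilizes $\pi(\hat M)$, so the fact that $\phi$ generates $\gal$ together with the closure of $(\Gamma_0^{\aut})^{\hat M}$ under composition yields a lift of every element of $(\Gamma_I^{\aut})^{\pi(\hat M)}$.

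The step I expect to be the crux is the descent from $\tilde G$ to $G$ in the backward direction. The trivialization hypothesis is only available for members of $S_r$, all of which contain the \emph{entire} $\mathbb{F}_{p^n}^\times$, whereas a general $G$ meets $\mathbb{F}_{p^n}^\times$ in a possibly proper $H$; the whole argument hinges on enlarging to $\tilde G$, transporting the stabilizing automorphism there, and then recovering stabilization of $G$ itself, which succeeds only because cyclicity of $\mathbb{F}_{p^n}^\times$ makes $H$ characteristic and because $T_\lambda\sigma$ splits compatibly with the direct-product structure of $R_{n,p}^\times$. Establishing that $G$ is genuinely a subgroup and that its $p$- and $p'$-parts separate cleanly is the routine-but-essential input that legitimizes this transfer.
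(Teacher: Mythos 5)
Your proposal is correct and follows essentially the same route as the paper's own proof: the same reduction to the unique ideal $I=(\varepsilon)$, the same forward argument forcing the lift of Frobenius to have the form $T_\lambda\sigma$, and the same backward argument that enlarges $G = H \times (\mathbb{Z}/p\mathbb{Z})^r$ to $\tilde G \in S_r$ and descends the stabilizing automorphism via the factorization $\psi\times\alpha$ and the fact that $H$ is characteristic in the cyclic group $\mathbb{F}_{p^n}^\times$. The step you identify as the crux is indeed the pivotal point of the paper's argument as well.
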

\indent \indent We remarked earlier that the class of finite local rings is a convenient setting for problems of this type. The justification for this is the fact that, for any extended submonoid $M$ in some finite local ring $R$ with maximal ideal $I$, the set $G= I^c \cap M$ forms a subgroup of $R^\times$. This was exploited in the proof of Lemma~\ref{lemma:2}.  
\\
\indent \indent The next lemma reduces the problem to a form in which we can solve it. Interestingly, the claim is roughly that each $\mathbb{F}_p$-subspace of $\mathbb{F}_{p^n}$ should behave like an ``eigenvector" for the Frobenius on $\mathbb{F}_{p^n}$, the corresponding ``eigenvalue" being multiplication by some element of $\mathbb{F}_{p^n}^\times$, considered as an $\mathbb{F}_p$-linear map. More precisely: 
\begin{lemma} \label{lemma:3}
Let $\phi$ be the Frobenius on $\mathbb{F}_{p^n}$. Then $\Gamma^{\aut}(R_{n,p})$ is a monoidal network if and only if for any $\mathbb{F}_p$-subspace $V \subseteq \mathbb{F}_{p^n}$, there is a $\lambda \in \mathbb{F}_{p^n}^\times$ such that $\phi (V)= \lambda V$. 
\begin{proof}
By Lemma~\ref{lemma:2}, we need only show that the right-hand side of the above holds if and only if $\mathbb{F}_{p^n}^\times$ trivializes $\gal (\mathbb{F}_{p^n}/\mathbb{F}_p)$ under the natural action of $\aut (R_{n,p})$ on $S_r$, for each $0 \leq r \leq n$. 
\\
\indent \indent For the forward direction, suppose $\Gamma^{\aut}(R_{n,p})$ is a monoidal network and let $V \subseteq \mathbb{F}_{p^n}$ be an $\mathbb{F}_p$-subspace. If we define: $H' \doteq \left\{ 1+\alpha \varepsilon: \alpha \in V \right\}$, then note that $H'$ is a subgroup of $R_{n,p}^\times$. Since any $1 \neq x \in H'$ has order $p$, we have $H \doteq \mathbb{F}_{p^n}^\times \times H' \in S_r$ for some $0 \leq r \leq n$. By hypothesis, $\mathbb{F}_{p^n}^\times$ trivializes $\gal (\mathbb{F}_{p^n}/\mathbb{F}_p)$ under the natural action of $\aut (R_{n,p})$ on $S_r$, it therefore follows that $T_\lambda \sigma$ stabilizes $H$ for some $\lambda \in \mathbb{F}_{p^n}^\times$. Then $T_\lambda \sigma$ must also stabilize $H'$, from which we get that $\phi (V)= \lambda^{-1} V$, as desired. 
\\
\indent \indent Now suppose that for any $\mathbb{F}_p$-subspace $V \subseteq \mathbb{F}_{p^n}$, there is a $\lambda \in \mathbb{F}_{p^n}^\times$ such that $\phi (V)= \lambda V$. Let $H \in S_r$ and define $V_H = \left\{ \alpha \in \mathbb{F}_{p^n}: 1+ \alpha \varepsilon \in H \right\}$. This is an additive subgroup of $\mathbb{F}_{p^n}$ and therefore an $\mathbb{F}_p$-subspace. By hypothesis, there is a $\lambda \in \mathbb{F}_{p^n}^\times$ with $\phi (V_H)= \lambda V_H$, or after relabeling, we may say that $\lambda \phi (V_H)=V_H$. Unraveling the definitions gives that $T_\lambda \sigma$ stabilizes $H$, and hence that $\theta (\sigma (H))= \theta (H)$. It follows that the induced action of $\gal (\mathbb{F}_{p^n}/ \mathbb{F}_p)$ on $S_r/ \mathbb{F}_p^\times$ is trivial, and thus that $\Gamma^{\aut}(R_{n,p})$ is a monoidal network by Lemma~\ref{lemma:2}.
\end{proof}
\end{lemma}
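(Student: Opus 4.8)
The plan is to reduce immediately via Lemma~\ref{lemma:2}: it suffices to show that, for each $0 \leq r \leq n$, the group $\mathbb{F}_{p^n}^\times$ trivializes $\gal(\mathbb{F}_{p^n}/\mathbb{F}_p)$ under the action of $\aut(R_{n,p})$ on $S_r$ if and only if every $r$-dimensional $\mathbb{F}_p$-subspace $V \subseteq \mathbb{F}_{p^n}$ satisfies $\phi(V) = \lambda V$ for some $\lambda \in \mathbb{F}_{p^n}^\times$. Since every $\mathbb{F}_p$-subspace has some dimension $0 \leq r \leq n$, ranging over all $r$ recovers the statement of the lemma. The heart of the argument is therefore to set up an $\aut(R_{n,p})$-equivariant bijection between $S_r$ and the Grassmannian of $r$-dimensional $\mathbb{F}_p$-subspaces of $\mathbb{F}_{p^n}$.

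First I would record the decomposition $R_{n,p}^\times \cong \mathbb{F}_{p^n}^\times \times (I+1)$, where $I = (\varepsilon)$, noting that every element $1 + \alpha\varepsilon$ of $I+1$ has order dividing $p$ (since $\varepsilon^2 = 0$ forces $(1+\alpha\varepsilon)^p = 1$), so that $I+1 \cong (\mathbb{F}_{p^n}, +)$ as an $\mathbb{F}_p$-vector space via $1 + \alpha\varepsilon \leftrightarrow \alpha$. Because $\gcd(p^n - 1, p) = 1$, any $H \in S_r$ (which by definition contains $\mathbb{F}_{p^n}^\times$) splits as $H = \mathbb{F}_{p^n}^\times \times H'$ with $H' = H \cap (I+1)$ of order $p^r$; the assignment $H \mapsto V_H \doteq \{\alpha \in \mathbb{F}_{p^n} : 1 + \alpha\varepsilon \in H\}$ is then a bijection from $S_r$ onto the $r$-dimensional $\mathbb{F}_p$-subspaces of $\mathbb{F}_{p^n}$.

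Next I would compute how the two types of automorphisms act on subspaces under this bijection. Since $T_\lambda$ fixes $\mathbb{F}_{p^n}^\times$ pointwise and sends $1 + \alpha\varepsilon \mapsto 1 + \lambda\alpha\varepsilon$, it acts on $S_r$ by $V_{T_\lambda(H)} = \lambda V_H$; and since $\sigma$ sends $1 + \alpha\varepsilon \mapsto 1 + \phi(\alpha)\varepsilon$, it acts by $V_{\sigma(H)} = \phi(V_H)$. In particular the $\mathbb{F}_{p^n}^\times$-orbit of $H$ corresponds exactly to the family $\{\lambda V_H : \lambda \in \mathbb{F}_{p^n}^\times\}$ of dilates of $V_H$.

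Finally I would translate the trivialization condition. Because $\gal(\mathbb{F}_{p^n}/\mathbb{F}_p)$ is generated by $\sigma$, the induced action on the orbit space $S_r/\mathbb{F}_{p^n}^\times$ is trivial if and only if $\sigma(H)$ lies in the $\mathbb{F}_{p^n}^\times$-orbit of $H$ for every $H \in S_r$, which by the computations above is precisely the condition that $\phi(V_H) = \lambda V_H$ for some $\lambda \in \mathbb{F}_{p^n}^\times$. Quantifying over all $H \in S_r$ and all $r$ yields the stated equivalence, the extreme cases $V = \{0\}$ and $V = \mathbb{F}_{p^n}$ holding trivially. The only real content is the equivariant bijection of the middle step; I expect the main obstacle to be the careful verification that every $H \in S_r$ factors as a direct product over its coprime-order pieces and that this factorization is respected by both $T_\lambda$ and $\sigma$, after which the equivalence is pure bookkeeping.
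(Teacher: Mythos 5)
Your proposal is correct and follows essentially the same route as the paper: both reduce via Lemma~\ref{lemma:2} and then translate between subgroups $H \in S_r$ and $\mathbb{F}_p$-subspaces via $H \mapsto V_H = \{\alpha : 1+\alpha\varepsilon \in H\}$, with $T_\lambda$ acting as dilation by $\lambda$ and $\sigma$ as $\phi$. Your packaging of this correspondence as a single $\aut(R_{n,p})$-equivariant bijection with the Grassmannian is a slightly tidier way of organizing the paper's two directions, but the substance is identical.
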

\indent \indent We are now adequately equipped to prove Theorem~\ref{thm:2}. We will prove the backwards direction first, as it is the easiest. The forward direction is handled for $n>4$ by a fairly straightforward counting argument, while the $n=4$ case is ruled out for most prime $p$ via a lower bound on Euler's $\varphi$ function found in~\cite{Rosser}. The remaining cases, of which there are two, are handled manually. 
\begin{proof}[Proof of Theorem 2]
We argue that the equivalent condition given in Lemma~\ref{lemma:3} occurs if and only if $n \leq 3$. Suppose first that $n \leq 3$. For $n=1$ the claim follows immediately. For $n=2$, simply note that on a one-dimensional $\mathbb{F}_p$-subspace $V= \spn \left\{\alpha \right\}$, we have $\phi (V)= \alpha^{p-1} V$. For $n=3$, we need only consider subspaces $V$ with $\dim V=2$. For this, it is enough to show that for any $\alpha \in \mathbb{F}_{p^3}$, there are $a,b,c,d \in \mathbb{F}_p$ with $\alpha^p= \frac{a\alpha+b}{c\alpha+d}$. Indeed, suppose this holds and let $V= \spn \left\{ \alpha^k, \alpha^r \right\}$, $k<r$, be a two-dimensional $\mathbb{F}_p$-subspace, where $\alpha$ is a generator of $\mathbb{F}_{p^3}^\times$. If the aforementioned claim holds, then $\alpha^{(r-k)p}= \frac{a\alpha^{r-k}+b}{c\alpha^{r-k}+d}$ for some $a,b,c,d \in \mathbb{F}_p$. This can be rearranged to give $\frac{\alpha^{rp}}{a\alpha^r+b\alpha^k}= \frac{\alpha^{kp}}{c\alpha^r+d\alpha^k} \doteq \lambda$. We then get $\phi (\alpha^k), \phi (\alpha^r) \in \lambda V$, thus $\phi (V)= \lambda V$. 
\\
\indent \indent Aiming to prove the original claim, let $\beta \in \mathbb{F}_{p^3}$. If $\beta \in \mathbb{F}_p$, taking $a=d=1$ and $b=c=0$ is sufficient. Otherwise, $\beta$ has degree 3 over $\mathbb{F}_p$, thus $\left\{1, \beta, \beta^2 \right\}$ is an $\mathbb{F}_p$-basis of $\mathbb{F}_{p^3}$. Suppose $\beta^p= a' \beta^2+b' \beta+c'$. If $a'=0$ we are done, so suppose $a' \neq 0$ and let $x^3+mx^2+nx+q \in \mathbb{F}_p[x]$ be the minimal polynomial of $\beta$ over $\mathbb{F}_p$. Defining $c= \frac{1}{a'}$, $d= \frac{m}{a'}- \frac{b'}{(a')^2}$, $a= cc'+db'-n$ and $b= dc'-q$, the equation $\beta^p = \frac{a\beta +b}{c\beta+d}$ is easy to verify. This proves the backwards direction. 
\\
\indent \indent Now suppose that for any $\mathbb{F}_p$-subspace $V \subseteq \mathbb{F}_{p^n}$, $\phi (V)= \lambda V$ for some $\lambda \in \mathbb{F}_{p^n}^\times$. Letting $\alpha \in \mathbb{F}_{p^n}$, consider the subspace spanned by $\left\{1, \alpha \right\}$. By hypothesis, there are $a,b,c,d \in \mathbb{F}_p$ and $\lambda \in \mathbb{F}_{p^n}^\times$ such that $1= \lambda (a+b\alpha)$ and $\alpha^p=\lambda (c+d\alpha)$. In particular, every $\alpha \in \mathbb{F}_{p^n}$ satisfies $\alpha^p= \frac{a\alpha+b}{c\alpha+d}$ for some $a,b,c,d \in \mathbb{F}_p$. 
\\
\indent \indent The case $p=2$ requires special treatment, we handle this now. In this case, choose some $\alpha \in \mathbb{F}_{2^n}$ of degree $n$ over $\mathbb{F}_2$. The equation $\alpha^2= \frac{a+b\alpha}{c+d\alpha}$ gives a polynomial over $\mathbb{F}_2$ of degree at most 3 satisfied by $\alpha$, it follows that $n \leq 3$ as desired. In the sequel we then assume $p>2$. 
\\
\indent \indent We now endeavor to show that $n \leq 4$. On the $\mathbb{F}_p$-vector space $\mathbb{F}_p^4$, we have a projection $\mathbb{F}_p^4 \to \mathbb{F}_p^2$ given by $(v_1, v_2, v_3, v_4) \mapsto (v_3, v_4)$, we let $U$ denote the kernel of this map. We additionally let $\mathbb{P}(\mathbb{F}_p^4)$ denote the projectivization of $\mathbb{F}_p^4$, and $W$ the image of $\mathbb{F}_p^4 \setminus U$ under the canonical map $\pi: \mathbb{F}_p^4 \setminus \left\{0 \right\} \to \mathbb{P}(\mathbb{F}_p^4)$. Now for each $\alpha \in \mathbb{F}_{p^n}$, we let $S_{\alpha}$ be the set of all $(a,b,c,d) \in \mathbb{F}_p^4$ with the property that $\alpha^p= \frac{a\alpha +b}{c\alpha+d}$. We previously showed that $S_\alpha \neq \emptyset$. Let $\hat{T}: \left\{ S_\alpha \right\}_{\alpha \in \mathbb{F}_{p^n}} \to \mathbb{F}_p^4$ have $\hat{T}(S_\alpha) \in S_\alpha$. This yields a (set-theoretic) map $T: \mathbb{F}_{p^n} \to W$ given by $T: \alpha \mapsto \pi (\hat{T}(S_\alpha))$, which in turn descends to an injection $\mathbb{F}_{p^n}/\!{\sim} \lhook\joinrel\longrightarrow W$, where $\sim$ is the equivalence relation induced by $T$. We thus obtain $\left|\mathbb{F}_{p^n}/\!{\sim} \right| \leq |W|= \frac{p^4-p^2}{p-1}$. 
\\
\indent \indent Now fix some $\alpha \in \mathbb{F}_{p^n}$. If $\beta \sim \alpha$ and $\hat{T}(S_\alpha)= (a,b,c,d)$, then $\hat{T}(S_\beta)= \lambda (a,b,c,d)$ for some $\lambda \in \mathbb{F}_p^\times$. We deduce that any $\beta \sim \alpha$ must be a root of the polynomial $x^p(cx+d)-(ax+b)$, and therefore the equivalence class of any $\alpha$ under $\sim$ contains at most $p+1$ elements. Denoting a general equivalence class as $[\alpha]$ and choosing a representative $x_i$ for each class, we have that:
\begin{align*}
p^n= \displaystyle \sum_i |[x_i]| \leq (p+1) \left|\mathbb{F}_{p^n}/\!{\sim} \right|
\end{align*}
which implies that $\left|\mathbb{F}_{p^n}/\!{\sim} \right| \geq \frac{p^n}{p+1}$. This, together with the previously obtained inequality, gives $\frac{p^n}{p+1} \leq \frac{p^4-p^2}{p-1}$, or $p^n \leq (p(p+1))^2<p^5$ for $p>2$. This proves that $n \leq 4$. 
\\
\indent \indent Restricting our attention now to the $n=4$ case, $p^2+1$ is a divisor of $p^4-1$, there are therefore exactly $\varphi (p^2+1)$ elements of order $p^2+1$ in $\mathbb{F}_{p^4}^\times$. Let $\alpha$ have order $p^2+1$ and let $\alpha^p= \frac{a \alpha +b}{c\alpha+d}$. We will narrow the possible values of the coefficients here and subsequently use this to bound $\varphi (p^2+1)$ from above. 
\\
\indent \indent We first note that $c\neq 0$. Indeed, suppose to the contrary, and after relabeling, we may then write $\alpha^p= a
\alpha +b$. Then $\frac{1}{\alpha}= \alpha^{p^2}=a\alpha^p+b$, implying $a\alpha^{p+1}+b\alpha-1=0$. We also have that $\alpha^{p+1}-a\alpha^2-b\alpha=0$. From these two, we obtain: $a^2\alpha^2+(b+ab)\alpha-1=0$. Now, $a\neq 0$, otherwise $\alpha \in \mathbb{F}_p$, so this implies that $\alpha$ has degree at most 2 over $\mathbb{F}_p$. This is impossible, any element of $\mathbb{F}_{p^4}$ of multiplicative order $p^2+1$ cannot lie in any proper subextension of $\mathbb{F}_{p^4}$. We conclude that $c \neq 0$. Because of this, we may scale to obtain, after relabeling: $\alpha^p = \frac{a\alpha+b}{\alpha +d}$. 
\\
\indent \indent A similar argument, applied to $\alpha^p = \frac{a\alpha+b}{\alpha +d}$ gives $a\alpha^{p+1}-\alpha^p+b\alpha -d=0$, as well as $\alpha^{p+1}+d\alpha^p -a\alpha -b=0$. From this, we obtain: $(-1-ad)\alpha^p+(a^2+b)\alpha +(ab-d)=0$. Now, we previously showed that $\alpha^p \neq k \alpha +r$ for any $k,r \in \mathbb{F}_p$. Then the following must each hold:
\begin{align*}
ad+1 &=0\\
a^2+b &=0 \\
ab-d &=0.
\end{align*}
Evidently from this, $a,d \neq 0$, and therefore $b \neq 0$ as well. We then obtain $a=- \frac{1}{d}, b= - \frac{1}{d^2}$ and $d^4=1$. For each fourth-root of unity $d \in \mathbb{F}_p$, let $p_d (x) = (x+d)x^p + \frac{1}{d}x+ \frac{1}{d^2}$ and let $Z (p_d)$ denote the zero set of $p_d$ in some algebraic closure of $\mathbb{F}_p$. The above says that the collection of elements in $\mathbb{F}_{p^4}^\times$ of order $p^2+1$ is contained in $\displaystyle \bigcup_{d^4=1} Z(p_d)$. We then have $\varphi (p^2+1) \leq 4(p+1)$ if $p \not\equiv -1 \pmod{4}$ and $\varphi (p^2+1) \leq 2 (p+1)$ if $p \equiv -1 \pmod{4}$, by Euler's Criterion. 
\\
\indent \indent Courageously pressing forward, we have the lower bound~\cite{Rosser}:
\begin{align*}
\varphi (p^2+1) > \frac{p^2+1}{e^\gamma \log \log (p^2+1)+ \frac{3}{\log \log (p^2+1)}}
\end{align*}
where $\gamma$ is the Euler-Mascheroni constant. This, together with $\varphi (p^2+1) \leq 4(p+1)$, gives:
\begin{align*}
\frac{4(p+1)}{p^2+1}(e^\gamma \log \log (p^2+1)+ \frac{3}{\log \log (p^2+1)}) \geq 1.
\end{align*}
However, the function $f(x)= \frac{4(x+1)}{x^2+1}(e^\gamma \log \log (x^2+1)+ \frac{3}{\log \log (x^2+1)})$ decreases monotonically on $[3, \infty)$ and we have $f(x)<1$ for $x \geq 23$. We deduce that $p <23$. 
\\
\indent \indent What remains is to rule out the finite set of remaining cases. The only primes $3 \leq p <23$ satisfying $\varphi (p^2+1) \leq 4(p+1)$ are $p=3,5$ or 7. For $p=7 \equiv -1 \pmod{4}$, we actually need $\varphi (p^2+1) \leq 2(p+1)$, which fails for $p=7$, thus the only cases to be checked are $p=3$ or $5$. For $p=3$, the polynomial $x^4+x^2-1 \in \mathbb{F}_3[x]$ is irreducible over $\mathbb{F}_3$, it therefore has a root in $\mathbb{F}_{3^4}$, say $\beta$. If we had $\beta^3= \frac{a\beta +b}{c\beta +d}$ for some $a,b,c,d \in \mathbb{F}_3$, then $\beta$ would be a root of the polynomial $cx^4+dx^3-ax-b$, implying this is an $\mathbb{F}_3$-multiple of $x^4+x^2-1$, a contradiction. A similar argument applied to the irreducible polynomial $x^4+x^2+2 \in \mathbb{F}_5[x]$ gives the result for $p=5$. This completes the proof.
\end{proof}
\indent \indent We conclude with a non-exhaustive discussion of potential directions in which this could be taken. The careful reader has likely noticed by now that the general existence question has not been touched: what properties must $R$ have to guarantee that $R$ admits a monoidal network? We have given examples in both directions, e.g. fields do not admit monoidal networks for trivial reasons, whereas examples of rings which do are scattered throughout the preceding text. More generally, we may consider the collection\footnote{We consider only subgroups of $\sym (R)$ here purely for the purpose of avoiding set-theoretic difficulties. Ideally, one would want to form a category of monoidal networks, then consider top action groups of isomorphism classes of monoidal networks here. Depending on how the morphisms are defined, this may still be too big to be a set, but generally one may consider only classes of monoidal networks for which the top action group acts faithfully on $R$ to avoid this inconvenience. All of this can be done, but in the interest of brevity we omit this formality.}: 
\begin{align*}
\mathcal{M}(R) \doteq \left\{ G \leq \sym (R) : (G, \iota) = (\Gamma_0, \gamma_0) \textrm{ for some monoidal network } \Gamma (R) \right\}
\end{align*}
where $\iota : G \hookrightarrow \sym (R)$ is the inclusion. The existence question then asks when $\mathcal{M}(R) \neq \emptyset$, but we can ask much more if we first examine $\mathcal{M}(R)$ for $R$ a finite local ring. Partially ordering $\mathcal{M}(R)$ by inclusion, we have the following result:
\begin{prop} \label{prop:9}
Let $R$ be a finite local ring that is not a field. Then $\mathcal{M}(R)$ is upward-closed and nonempty. In particular, $R$ admits a monoidal network.
\begin{proof}
For the first claim, let $G \in \mathcal{M}(R)$ and let $G \leq H \leq \sym (R)$. Then $G$ is the top action group of some monoidal network defined over $R$, say $\Gamma (R)$. Define a new multiaction $\Gamma' (R)$ by setting, for $I \neq 0$, $\Gamma_I' \doteq \Gamma_I$, acting on $R/I$ in the manner dictated by $\Gamma (R)$. Let $\Gamma_0= H$, the action on $R$ being induced by the inclusion $H \hookrightarrow \sym (R)$. It follows immediately that $\Gamma' (R)$ is a monoidal network on $R$ since $\Gamma (R)$ is, so $H \in \mathcal{M}(R)$. 
\\
\indent \indent For the second part, we will show that $\sym (R) \in \mathcal{M}(R)$. For $0 \neq I \subseteq R$, assign $\Gamma_I = \aut (R/I)$, and take $\Gamma_0 = \sym (R)$. With each of these acting naturally on their respective quotients, this forms a multiaction on $R$, call this $\Gamma (R)$. Let $I$ be the unique maximal ideal contained in $R$ with residue field $R/I \cong \mathbb{F}_{p^n}$, and let $\phi$ be the Frobenius on $R/I$. Fix an extended submonoid $M$ in $R$, we argue that $I$ is a covering ideal of $M$ with $\hat{M}=M$ under this multiaction. For $x \in M \setminus I$, let $[x]$ denote the equivalence class of $x$ in $R/I$. Then it follows from the conditions on $R$ that $x^{-1}$ is a power of $x$, and moreover that $[x] \cap M = x \cdot ( M \cap (1+I))$. In particular, for any $a,b \in R/I$, $a,b \neq 0$ with $\pi^{-1}(a) \cap M, \pi^{-1}(b) \cap M \neq \emptyset$, we can find a bijection $\gamma_{a,b} : \pi^{-1}(a) \cap M \to  \pi^{-1}(b) \cap M$. We then define $\tilde{\phi}: R \to R$ by:
\begin{align*}
\tilde{\phi}: x \mapsto \begin{cases} 
      x & x \notin M, x \in I \cap M \\
      \gamma_{a, \phi (a)} (x) & x \in \pi^{-1}(a) \cap M, a \neq 0.  
   \end{cases}
\end{align*}
It is easy to see that $\tilde{\phi} \in \Gamma_0^M$. Moreover, by construction $\tilde{\phi}$ lifts $\phi$ along $I$ and since $\phi$ generates $\aut (R/I)$, $I$ is a (proper, nonzero) covering ideal of $M$. Then $\Gamma (R)$ forms a monoidal network on $R$ with $\Gamma_0= \sym (R)$.
\end{proof}
\end{prop}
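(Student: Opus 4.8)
The plan is to treat the two assertions separately: first that $\mathcal{M}(R)$ is upward-closed, which is essentially formal, and then that $\sym(R) \in \mathcal{M}(R)$, which yields nonemptiness and hence (combined with upward-closure, or on its own) that $R$ admits a monoidal network. The only genuine work lies in the second claim, and there the crux is a counting fact peculiar to finite local rings: all the nonzero fibers of the projection $\pi$ lying over an extended submonoid have the same cardinality.

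For upward-closure, suppose $G \in \mathcal{M}(R)$ is realized by a monoidal network $\Gamma(R)$ and let $G \le H \le \sym(R)$. I would define a new multiaction $\Gamma'(R)$ agreeing with $\Gamma(R)$ on every nonzero ideal and setting $\Gamma_0' = H$ with the inclusion action. First I would check $\Gamma'(R)$ is a multiaction: the only new requirement is that $\Gamma_0' = H$ extend $\aut(R)$, and this is inherited from $G$ since the image of the natural $\aut(R)$-action on $R$ lies in $G \subseteq H$. Then I would observe that the collection of extended submonoids of $R$ depends only on $R^{\aut(R)}$, hence is unchanged; and for each such submonoid $M$, the covering ideal $I$ and associated submonoid $\hat{M}$ furnished by $\Gamma(R)$ still work for $\Gamma'(R)$, because each lift $\tilde{g} \in \Gamma_0^{\hat{M}} = G^{\hat{M}}$ continues to stabilize $\hat{M}$ inside the larger group, so $\tilde{g} \in H^{\hat{M}} = (\Gamma_0')^{\hat{M}}$. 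Thus $\Gamma'(R)$ is again a monoidal network and $H \in \mathcal{M}(R)$.

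For nonemptiness, I would exhibit the specific multiaction with $\Gamma_0 = \sym(R)$ and $\Gamma_I = \aut(R/I)$ for $I \neq 0$, and verify it is a monoidal network using the unique maximal ideal $I$ (proper and nonzero, since $R$ is not a field) as a universal covering ideal, always with $\hat{M} = M$. Fix an extended submonoid $M$. The essential structural input is that any $x \in M \setminus I$ is a unit (by localness) whose inverse is a power of $x$ (by finiteness), so $M$ is closed under inversion of its units and each nonzero fiber satisfies $\pi^{-1}(a) \cap M = x \cdot (M \cap (1+I))$ for any $x$ lying over $a$; hence all nonzero fibers have cardinality $|M \cap (1+I)|$ and admit mutual bijections $\gamma_{a,b}$. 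I would then note that $\aut(R/I) = \gal(\mathbb{F}_{p^n}/\mathbb{F}_p)$ is cyclic, generated by the Frobenius $\phi$, and that $\pi(M) \setminus \{0\}$ is a finite submonoid of the cyclic group $\mathbb{F}_{p^n}^\times$, hence a characteristic subgroup, so $\phi$ stabilizes $\pi(M)$. Defining $\tilde{\phi}$ to be the identity off the nonzero fibers and the fiberwise bijection $\gamma_{a, \phi(a)}$ on the fiber over each $a \neq 0$, I obtain $\tilde{\phi} \in \Gamma_0^M = \sym(R)^M$ lifting $\phi$ across $I$. Since $\phi$ generates $\aut(R/I)$ and lifts compose, every element of $(\Gamma_I)^{\pi(M)} = \aut(R/I)$ lifts, so $I$ covers $M$ and the multiaction is a monoidal network.

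The main obstacle I anticipate is the nonemptiness half, and within it the equal-cardinality fiber computation: one must use both hypotheses on $R$ (local and finite) to guarantee that $M$ contains the inverses of its units, and hence that the fibers over distinct nonzero residues have matching size, which is exactly what makes the permutation $\tilde{\phi}$ a well-defined bijection stabilizing $M$. Verifying that $\phi$ actually stabilizes $\pi(M)$ — so that $\gamma_{a, \phi(a)}$ is even defined — is a second small point that rests on the cyclicity of $\mathbb{F}_{p^n}^\times$. By contrast, the upward-closure claim is purely formal and should present no difficulty.
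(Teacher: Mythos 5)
Your proposal is correct and follows essentially the same route as the paper: the same formal upward-closure argument, and the same construction of a monoidal network with top action group $\sym(R)$, hinging on the fiber identity $\pi^{-1}(a)\cap M = x\cdot(M\cap(1+I))$ forced by $R$ being finite and local. Your explicit verification that $\phi$ stabilizes $\pi(M)$ (since $\pi(M)\setminus\{0\}$ is a subgroup, hence characteristic, of the cyclic group $\mathbb{F}_{p^n}^\times$) is a small detail the paper leaves implicit, but it does not alter the argument.
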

\noindent In view of the above, Theorem~\ref{thm:2} says that for $R=R_{n,p}$, $\mathcal{M}(R_{n,p})$ is a filter on the partially ordered set (poset) of subgroups of $\sym (R)$ when $n \leq 3$. In fact, Theorem~\ref{thm:2} is equivalent to the statement that $\mathcal{M}(R_{n,p})$ is the principal filter generated by $\aut (R_{n,p})$ if and only if $n \leq 3$. Additionally considering Proposition~\ref{prop:9}, it is natural then to inquire about when exactly $\mathcal{M}(R)$ has the structure of a filter on the subgroup poset of $\sym (R)$. For a finite local ring $R$ that is not a field, this is equivalent to asking when there is a unique minimal subgroup $G \leq \sym (R)$ occurring as the top action group of some monoidal network on $R$ (note that the potential existence of non-principal filters on the subgroup poset of $\sym (R)$ for infinite $R$ prevents us from asserting this equivalence in the infinite case). 
\\
\indent \indent It is easy to verify that the first part of Proposition~\ref{prop:9} holds without the assumption that $R$ is finite and local, $\mathcal{M}(R)$ is upward-closed for any $R$. In contrast, $\mathcal{M}(R)$ can be either empty or nonempty if the finiteness condition on $R$ is dropped. For instance, $\mathbb{R}[x]/(x^2)$ is a local ring which admits a monoidal network: the automorphic multiaction on this ring is a monoidal network because $\mathbb{R}$ is rigid. Less trivially, $\mathcal{M}( \overline{\mathbb{F}_{p}}[x]/(x^2)) \neq \emptyset$, where $\overline{\mathbb{F}_{p}}$ is the algebraic closure of $\mathbb{F}_p$. This follows by replicating the argument from Proposition~\ref{prop:9}, noting that for this ring, the equality $[x] \cap M = x \cdot ( M \cap (1+I))$ still holds for any extended submonoid $M$ and $x \in M \setminus I$, since $x^{-1}$ is still a power of $x$ here. We also have a class of examples in the other direction:
\begin{prop} \label{prop:10}
Let $F$ be an uncountable algebraically closed field. Then $F[x]/ (x^2)$ admits no monoidal network.
\begin{proof}
We let $R \doteq F[x]/ (x^2)$ for brevity. Suppose to the contrary that $\Gamma (R)$ is some monoidal network defined on $R$. Since $F$ is uncountable, $F$ has infinite transcendence degree over the prime subfield $k \subseteq F$. Pick a transcendence basis $S$ for $F$ over $k$ and let $\alpha, \beta \in S$ be distinct. Let $M$ be the multiplicative submonoid of $R$ generated by $k$, $\alpha$ and all elements of the form $\beta +r \varepsilon$ with $r \in F$, where as before $\varepsilon \in R$ is the image of $x \in F[x]$ under projection. Explicitly: $M= \left\{ a \alpha^m (\beta+r\varepsilon )^n : a \in k, m,n \geq 0, r \in F \right\}$. 
\\
\indent \indent It is not immediately clear that $M$ is an extended submonoid of $R$, we prove this now. As in the proof of Theorem~\ref{thm:2}, we identify the subgroup of $\aut (R)$ generated by the $T_\lambda :a+b \varepsilon \mapsto a+b\lambda \varepsilon$ with $F^\times$ and the subgroup generated by the maps of the form $a+b\varepsilon \mapsto \phi (a) +\phi (b) \epsilon$, for $\phi \in \aut (F)$, with $\aut (F)$. Then $\aut (R)= F^\times \rtimes \aut (F)$ and the fixed subring of $R$ is $F^{\aut (F)} \subseteq R$. Since $F$ is algebraically closed, $F^{\aut (F)}=k$, thus $k \subseteq M$ implies that $M$ is an extended submonoid of $R$ as claimed. Since $\Gamma (R)$ is a monoidal network, the unique proper, nonzero ideal $I= (\varepsilon )$ in $R$ must be a covering ideal of $M$. For the associated submonoid we must have $\hat{M}=M$, since any nonzero $x \in M$ is a unit in $R$. 
\\
\indent \indent Now we have $\pi (M) = \left\{ a \alpha^m \beta^n: a \in k, m,n \geq 0 \right\}$. Furthermore, note that the transposition $(\alpha \beta)$ on the transcendence basis $S$ induces an automorphism $\gamma :F \to F$ with $\gamma (\pi (M))= \pi (M)$. The action of $\Gamma_I$ on $R/I \cong F$ extends that of $\aut (F)$, there is thus some $g \in \Gamma_I^{\pi (M)}$ with $g \cdot x= \gamma (x)$ for each $x \in F$. Note then that the lift $\tilde{g}: M \to M$ of $g$ across $I$ induces injections:
\begin{align*}
\tilde{g}: \pi^{-1}(\alpha) \cap M \hookrightarrow \pi^{-1} (\beta) \cap M \\
\tilde{g}: \pi^{-1}(\beta) \cap M \hookrightarrow \pi^{-1} (\alpha) \cap M
\end{align*}
so that $|\pi^{-1}(\alpha) \cap M|= |\pi^{-1}(\beta) \cap M|$. However, by construction $|\pi^{-1}(\beta) \cap M| \geq |F|$, while the algebraic independence of $\alpha$ and $\beta$ over $k$ implies $|\pi^{-1}(\alpha) \cap M|=1$, a contradiction.
\end{proof}
\end{prop}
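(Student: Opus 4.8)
The plan is to argue by contradiction, leveraging the fact that $R \doteq F[x]/(x^2)$ is local with a \emph{unique} proper, nonzero ideal $I = (\varepsilon)$. Consequently, if $\Gamma(R)$ were a monoidal network, then $I$ would necessarily be the covering ideal of every extended submonoid of $R$, and the associated submonoid $\hat M$ would have to equal $M$ whenever every nonzero element of $M$ is a unit (since then $M \setminus \hat M \subseteq I$ forces $M \setminus \hat M \subseteq I \cap M = \{0\} \subseteq \hat M$). The strategy is to exploit the uncountability of $F$---which forces infinite transcendence degree over the prime subfield $k$---to manufacture a single extended submonoid $M$ for which the lifting condition across $I$ is impossible on cardinality grounds.

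First I would pin down the automorphism structure, mirroring the computation in the proof of Theorem~\ref{thm:2}: writing $\aut(R) = F^\times \rtimes \aut(F)$ and observing that, because $F$ is algebraically closed, $F^{\aut(F)} = k$, so the fixed subring $R^{\aut(R)}$ is exactly $k$. This identifies the constraint an extended submonoid must satisfy, namely that it contain $k$. Next I would build the \emph{asymmetric} submonoid: fix a transcendence basis $S$ of $F$ over $k$, choose distinct $\alpha, \beta \in S$, and let $M$ be the multiplicative submonoid generated by $k$, by $\alpha$, and by the entire family $\{\beta + r\varepsilon : r \in F\}$. The point of this design is that the fiber of $\pi : R \to R/I \cong F$ over $\beta$ is forced to be enormous (of size at least $|F|$) while the fiber over $\alpha$ collapses to a single point. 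Verifying that $M$ is a genuine extended submonoid (it contains $k$ and is multiplicatively closed by construction) and that $\hat M = M$ occupies the setup.

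The contradiction then comes from a symmetry/counting clash. The transposition $(\alpha\,\beta)$ of basis elements extends to a field automorphism $\gamma$ of $F$ fixing $k$, and one checks $\gamma(\pi(M)) = \pi(M)$, where $\pi(M) = \{a\alpha^m\beta^n : a \in k,\ m,n \geq 0\}$. Since the action of $\Gamma_I$ on $R/I \cong F$ extends that of $\aut(R/I) = \aut(F)$, there is some $g \in \Gamma_I^{\pi(M)}$ realizing $\gamma$; the covering condition then supplies a lift $\tilde g \in \Gamma_0^{M}$ with $\pi\tilde g = \gamma\pi$ on $M$. Because $\tilde g$ is a bijection of $M$ compatible with $\pi$ and $\gamma$, it must restrict to a bijection $\pi^{-1}(\alpha) \cap M \to \pi^{-1}(\beta) \cap M$, forcing $|\pi^{-1}(\alpha) \cap M| = |\pi^{-1}(\beta) \cap M|$. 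This is absurd: the $r\varepsilon$ family gives $|\pi^{-1}(\beta) \cap M| \geq |F|$, whereas algebraic independence of $\alpha$ and $\beta$ over $k$ (so that the monomials $\alpha^m\beta^n$ are $k$-linearly independent) forces the only element of $M$ projecting to $\alpha$ to be $\alpha$ itself, giving $|\pi^{-1}(\alpha) \cap M| = 1$.

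I expect the main obstacle to be the \emph{design} of $M$ rather than any single calculation: one must engineer a set that is simultaneously multiplicatively closed, contains the fixed subring $k$, and exhibits a gross imbalance between two fibers that are nevertheless interchanged by a lift of an ambient field automorphism. The uncountability hypothesis enters precisely here, guaranteeing two independent transcendentals $\alpha, \beta$ and an $F$-sized family of perturbations $\beta + r\varepsilon$; the remaining verifications (that $\gamma$ is well-defined, that $\hat M = M$, and the singleton-fiber count) are routine once the construction is in place.
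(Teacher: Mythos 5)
Your proposal is correct and follows essentially the same route as the paper's own proof: the same asymmetric submonoid $M$ generated by $k$, $\alpha$, and the family $\beta + r\varepsilon$, the same use of the transposition $(\alpha\,\beta)$ on a transcendence basis to produce an element of $\Gamma_I^{\pi(M)}$ whose lift forces $|\pi^{-1}(\alpha)\cap M| = |\pi^{-1}(\beta)\cap M|$, and the same cardinality contradiction. No substantive differences to report.
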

\indent \indent The final question we wish to consider concerns Proposition~\ref{prop:10}. The proof given above relies heavily on the existence of a transcendence basis for $F$ over $k$, this comes into play not only in the construction of $M$, where conceivably it could be avoided, but also implicitly in the claim that the subfield of $F$ fixed by $\aut (F)$ is $k$. It seems quite difficult to carry the proof of Proposition~\ref{prop:10} forward without the knowledge afforded to us by the axiom of choice here. In its absence, little can actually be said about $F^{\aut (F)}$ and therefore about the structure of extended submonoids in $F[x]/(x^2)$ as well. We are then placed at a fundamental disadvantage for the study of monoidal networks on $F[x]/(x^2)$. Consequently, we pose the following question:
\begin{ques}
Is Proposition~\ref{prop:10} a theorem of ZF?
\end{ques}
\noindent It is the view of the author that an answer in either direction to the question above would be interesting. If the answer is in the affirmative, then this would prove that the existence of uncountable, rigid algebraically closed fields is inconsistent with ZF, since for rigid $F$, the automorphic multiaction on $F[x]/(x^2)$ is a trivial monoidal network. On the other hand, if the answer is negative, then this would demonstrate that some (perhaps weaker) form of the axiom of choice is required to prove Proposition~\ref{prop:10}. 
\\
\indent \indent It seems that the answer to the question may be ``no," and we close with some support for this viewpoint. For this, we examine the occurrence of a weaker property of multiactions on $R_F \doteq F[x]/ (x^2)$ for uncountable algebraically closed fields $F$. Fix an infinite, well-ordered cardinal $\aleph_{\alpha}$ and a multiaction $\Gamma (R_F)$ on $R_F$. We say that $\Gamma (R_F)$ is an $\aleph_{\alpha}$-\emph{monoidal network} if any extended submonoid $M \subseteq R_F$ satisfying $|\pi (M) \setminus F^{\aut (F)}| \leq \aleph_{\alpha}$ has a proper, nonzero covering ideal under $\Gamma (R_F)$. Note that the argument for Proposition~\ref{prop:10} actually proves that, for any uncountable algebraically closed field $F$ and infinite $\aleph_{\alpha}$, $R_F$ admits no $\aleph_{\alpha}$-monoidal network. Indeed, the extended submonoid $M$ constructed in the proof of Proposition~\ref{prop:10} has $|\pi (M)|= \aleph_0$ (assuming choice) and no proper, nonzero covering ideal under any multiaction $\Gamma (R_F)$. Thus, in the presence of the axiom of choice, we have:
\begin{prop} \label{prop:11}
For any uncountable algebraically closed field $F$ and infinite $\aleph_{\alpha}$, $R_F$ admits no $\aleph_{\alpha}$-monoidal network.
\end{prop}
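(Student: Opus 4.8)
The plan is to recycle, essentially verbatim, the extended submonoid $M$ built in the proof of Proposition~\ref{prop:10}, and to show that it already obstructs the weaker $\aleph_\alpha$-monoidal network condition. The only genuinely new work is a cardinality bookkeeping check on $\pi(M)$, together with the observation that the contradiction extracted in Proposition~\ref{prop:10} never used any feature of a particular multiaction beyond the multiaction axioms themselves. Concretely, I would take $M = \{a\alpha^m(\beta+r\varepsilon)^n : a \in k, \ m,n \geq 0, \ r \in F\}$, where $k$ is the prime subfield, $S$ is a transcendence basis of $F$ over $k$, and $\alpha,\beta \in S$ are distinct, exactly as before; recall that $F^{\aut(F)} = k$ and that $M$ is therefore an extended submonoid.

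First I would verify the hypothesis in the definition of an $\aleph_\alpha$-monoidal network. Applying $\pi$ collapses the $\varepsilon$-component, so $\pi(M) = \{a\alpha^m\beta^n : a \in k,\ m,n \geq 0\}$. The prime subfield $k$ is either $\mathbb{Q}$ or some $\mathbb{F}_p$, hence countable, and the index set $k \times \mathbb{N} \times \mathbb{N}$ is countable, so $\pi(M)$ is at most countable. Consequently $|\pi(M) \setminus F^{\aut(F)}| = |\pi(M) \setminus k| \leq \aleph_0 \leq \aleph_\alpha$ for every infinite $\aleph_\alpha$, so $M$ falls within the scope of the $\aleph_\alpha$-monoidal network requirement for any such $\aleph_\alpha$ and any uncountable algebraically closed $F$.

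Next I would argue that $M$ admits no proper, nonzero covering ideal under \emph{any} multiaction on $R_F$, which is the crux and the step that must be phrased with care. The unique proper nonzero ideal of $R_F$ is $I = (\varepsilon)$, and since every nonzero element of $M$ is a unit while $I$ consists of nilpotents, the condition $M \setminus \hat{M} \subseteq I$ forces $\hat{M} = M$; thus $I$ is the only candidate. The automorphism $\gamma$ of $F \cong R_F/I$ induced by the transposition $(\alpha\,\beta)$ of $S$ satisfies $\gamma(\pi(M)) = \pi(M)$, and because every multiaction requires $\Gamma_I$ to \emph{extend} $\aut(R_F/I) = \aut(F)$, this $\gamma$ is realized by some $g \in \Gamma_I^{\pi(M)}$, regardless of the ambient multiaction. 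Any lift $\tilde{g} \in \Gamma_0^{M}$ then restricts, via the defining commuting square $\pi\tilde{g} = g\pi$, to mutually inverse injections between $\pi^{-1}(\alpha) \cap M$ and $\pi^{-1}(\beta) \cap M$, so these fibres have equal cardinality; but the former is the singleton $\{\alpha\}$ while the latter contains $\{\beta + r\varepsilon : r \in F\}$ and so has size at least $|F|$, a contradiction. No property of $\Gamma_0$, of $\Gamma_I$, or of the chosen lift entered beyond the multiaction axioms, so this rules out a covering ideal for every multiaction. Combining the two points, any $\aleph_\alpha$-monoidal network on $R_F$ would be obliged to furnish $M$ with a proper nonzero covering ideal, which is impossible; hence none exists. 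The one real subtlety worth flagging — and the reason the statement is asserted only under choice — is that the existence of the transcendence basis $S$, and the identification $F^{\aut(F)} = k$, both lean on the axiom of choice, which is precisely the dependence interrogated by the preceding \textsc{Question}.
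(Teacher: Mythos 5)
Your proposal is correct and takes essentially the same route as the paper, whose own proof of this proposition consists precisely of the observation that the submonoid $M$ from Proposition~\ref{prop:10} has $|\pi(M)|=\aleph_0$ and admits no proper, nonzero covering ideal under \emph{any} multiaction on $R_F$, since the obstruction there used only the multiaction axioms. The lone nitpick is that the two restrictions of $\tilde{g}$ to the fibres over $\alpha$ and $\beta$ need not be mutually inverse (only $g$, not $\tilde{g}$, is known to be an involution); they are, however, injections in both directions, which already forces equality of the fibre cardinalities and yields the contradiction.
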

\noindent \noindent This claim cannot be proven without the axiom of choice, in fact, there is a model of ZF$+$DC in which $R_{\mathbb{C}}$ admits an $\aleph_0$-monoidal network, so even dependent choice is not strong enough to prove this. 
\begin{prop} \label{prop:12}
In any model of ZF$+$DC in which all sets in $\mathbb{R}$ have the property of Baire, $R_{\mathbb{C}}$ admits an $\aleph_0$-monoidal network.
\begin{proof}
Under such circumstances, as is noted in \cite{Rosendal}, any homomorphism of additive abelian groups $\mathbb{C} \to \mathbb{C}$ is necessarily continuous and it therefore follows that the only field automorphisms of $\mathbb{C}$ are the identity and conjugation. In particular, the field fixed by the automorphism group of $\mathbb{C}$ is $\mathbb{R}$ in this case, and the extended submonoids of $R_{\mathbb{C}}$ are precisely the multiplicative submonoids $M$ with $\mathbb{R} \subseteq M$. The claim essentially follows from the lemma below: 
\begin{lemma} \label{lemma:4}
Let $M \subseteq R_{\mathbb{C}}$ be multiplicatively closed with $\mathbb{R} \subseteq M$. Let $\pi: R_{\mathbb{C}} \to \mathbb{C}$ be the projection, and suppose that $\pi^{-1} (z) \cap M$ and $\pi^{-1} ( \overline{z}) \cap M$ are both nonempty for some fixed $z$. Then $|\pi^{-1} (z) \cap M| = |\pi^{-1} ( \overline{z}) \cap M|$. 
\begin{proof}
The claim is obvious when $z=0$, so suppose $z \neq 0$. Take any $z+ \alpha \varepsilon, \overline{z}+ \beta \varepsilon \in M$. Then we have $\frac{1}{|z|^2} (z+\alpha \varepsilon)^2 (\overline{z}+ \beta \varepsilon) \in M$. However, actually computing this gives:
 \begin{align*}
 \frac{1}{|z|^2} (z+\alpha \varepsilon)^2 (\overline{z}+ \beta \varepsilon) = z+ \frac{1}{\overline{z}}(\beta z+ 2\alpha \overline{z}) \varepsilon \in \pi^{-1} (z) \cap M.
\end{align*}  Thus, fixing some $z+ \alpha_0 \varepsilon \in M$, we obtain an injection $\pi^{-1}(\overline{z}) \cap M \hookrightarrow \pi^{-1}(z) \cap M$ given by:
\begin{align*}
\overline{z}+ \beta \varepsilon \mapsto \frac{1}{|z|^2} (z+\alpha_0 \varepsilon)^2 (\overline{z}+\beta \varepsilon).
\end{align*} 
The lemma follows by interchanging the roles of $z$ and $\overline{z}$ and applying the Cantor-Schroeder-Bernstein theorem.
\end{proof}
\end{lemma}
\indent \indent In view of Lemma~\ref{lemma:4}, define a multiaction $\Gamma( R_{\mathbb{C}})$ on $R_{\mathbb{C}}$ by setting $\Gamma_I= \aut (R_{\mathbb{C}}/I)$ for $I \neq 0$ and $\Gamma_0(R_{\mathbb{C}})= \sym (R_{\mathbb{C}})$. Let $M \subseteq R_{\mathbb{C}}$ be an extended submonoid with $\pi (M) \setminus \mathbb{R}$ countable. Letting $I= (\varepsilon )$ henceforth, note that $\Gamma_I^{\pi (M)}$ is either trivial or generated by $\phi :z \mapsto \overline{z}$. If the former holds, then $I$ is a covering ideal of $M$ with $\hat{M}=M$, so suppose the latter holds. Then $\phi$ stabilizes $\pi (M)$, so for any $z \in \mathbb{C}$, either $\pi^{-1} (z) \cap M, \pi^{-1}(\overline{z}) \cap M$ are both empty or both nonempty. Setting $\mathcal{A}= \left\{ z \in \pi (M) \setminus \mathbb{R}:  \pi^{-1} (z) \cap M, \pi^{-1}(\overline{z}) \cap M \neq \emptyset \right\}$, notice that $\mathcal{A}$ is countable by hypothesis, and Lemma~\ref{lemma:4} implies the existence of a bijection $\gamma_z : \pi^{-1} (z) \cap M \to  \pi^{-1}(\overline{z}) \cap M$ for any $z \in \mathcal{A}$ with $\Im (z)>0$. We may now assemble these bijections into a global bijection, since there are only countably many of them. Define $\tilde{\phi}: R_{\mathbb{C}} \to R_{\mathbb{C}}$ by:
\begin{align*}
\tilde{\phi}: x \mapsto \begin{cases} 
      x & x \notin M \textrm{ or } \pi (x) \in \mathbb{R} \\
      \gamma_{\pi (x)} (x) & x \in M, \Im (\pi (x))>0 \\
      \gamma_{\overline{\pi (x)}}^{-1}(x) & x \in M, \Im (\pi (x))<0. 
   \end{cases}
\end{align*}
We have $\tilde{\phi} \in \Gamma_0^{M}$ and $\pi \tilde{\phi}=\phi \pi$ on $M$, thus $\tilde{\phi}$ gives a lift of $\phi$ along $I$ and so $I$ is a covering ideal of $M$ with $\hat{M}=M$. Then $\Gamma (R_{\mathbb{C}})$ is an $\aleph_0$-monoidal network on $R_{\mathbb{C}}$.
\end{proof}
\end{prop}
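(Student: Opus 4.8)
The plan is to exhibit an explicit multiaction on $R_{\mathbb{C}}$ and verify the $\aleph_0$-covering condition fiber-by-fiber, using an algebraic identity so that no appeal to choice beyond DC is required. First I would record the structural input supplied by the hypothesis: under ZF$+$DC together with the Baire property for all sets of reals, every additive endomorphism of $\mathbb{C}$ is continuous, whence the only field automorphisms of $\mathbb{C}$ are the identity and complex conjugation $\phi$, and $\mathbb{C}^{\aut(\mathbb{C})}=\mathbb{R}$. Consequently the extended submonoids of $R_{\mathbb{C}}$ are exactly the multiplicative submonoids containing $\mathbb{R}$, the unique proper nonzero ideal is $I=(\varepsilon)$ with $R_{\mathbb{C}}/I\cong\mathbb{C}$, and $\aut(R_{\mathbb{C}}/I)=\{\id,\phi\}$. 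I would then take as candidate multiaction $\Gamma_J=\aut(R_{\mathbb{C}}/J)$ for $J\neq 0$ and $\Gamma_0=\sym(R_{\mathbb{C}})$; this is a multiaction because the top group is all of $\sym(R_{\mathbb{C}})$ and so trivially extends $\aut(R_{\mathbb{C}})$.

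The core of the argument is to show that every extended submonoid $M$ with $|\pi(M)\setminus\mathbb{R}|\leq\aleph_0$ admits $I$ as a covering ideal with associated submonoid $\hat{M}=M$. Since then $M\setminus\hat{M}=\emptyset$, only the lifting condition must be checked. The stabilizer $\Gamma_I^{\pi(M)}$ is a subgroup of $\{\id,\phi\}$, hence is either trivial, in which case $\id_{R_{\mathbb{C}}}$ lifts $\id$ and we are done, or equals $\{\id,\phi\}$, in which case the only nontrivial element to lift is conjugation. I would reduce the construction of a lift $\tilde{\phi}\in\Gamma_0^{M}$ of $\phi$ to producing, for each non-real $z$ with both fibers $\pi^{-1}(z)\cap M$ and $\pi^{-1}(\bar z)\cap M$ nonempty, a bijection between these two fibers: declaring $\tilde{\phi}$ to be the identity off $M$ and on the real fibers, and to implement these fiber bijections (and their inverses) on the non-real part, yields an involution stabilizing $M$ with $\pi\tilde{\phi}=\phi\pi$ on $M$.

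The existence of the fiber bijections is the content of an auxiliary cardinality lemma, which I would prove by an explicit identity: for $z\neq 0$ and any $z+\alpha\varepsilon,\ \bar z+\beta\varepsilon\in M$, the element $\tfrac{1}{|z|^2}(z+\alpha\varepsilon)^2(\bar z+\beta\varepsilon)$ lies in $M$ (as $M\supseteq\mathbb{R}$ is multiplicatively closed), and a direct computation of its $\varepsilon$-coefficient shows it lies in $\pi^{-1}(z)\cap M$ and depends injectively on $\beta$; fixing one element of the target fiber thus gives an injection $\pi^{-1}(\bar z)\cap M\hookrightarrow\pi^{-1}(z)\cap M$, and symmetry together with the Cantor-Schroeder-Bernstein theorem (valid in ZF) yields equality of cardinalities and hence a bijection. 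The delicate point, and the main obstacle, is the assembly of these fiber bijections into a single global map without the axiom of choice. This is precisely where the countability hypothesis intervenes: the set $\mathcal{A}$ of non-real $z$ with both fibers nonempty is countable, so only countably many fiber representatives (needed to pin down the explicit injections) must be selected, and DC suffices for this. Once the countably many bijections are fixed, $\tilde{\phi}$ is determined, and I would conclude that $I$ is a covering ideal of every such $M$, so that $\Gamma(R_{\mathbb{C}})$ is an $\aleph_0$-monoidal network.
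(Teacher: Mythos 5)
Your proposal is correct and follows essentially the same route as the paper: the same multiaction ($\Gamma_J=\aut(R_{\mathbb{C}}/J)$ for $J\neq 0$, $\Gamma_0=\sym(R_{\mathbb{C}})$), the same fiber-cardinality lemma proved via the identity $\frac{1}{|z|^2}(z+\alpha_0\varepsilon)^2(\overline{z}+\beta\varepsilon)$ together with Cantor--Schroeder--Bernstein, and the same assembly of the countably many fiber bijections into a global lift of conjugation. Your explicit remark that DC/countable choice is what licenses selecting the countably many fiber representatives makes the paper's phrase ``since there are only countably many of them'' slightly more precise, but the argument is the same.
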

\noindent Our original claim that such a model exists now follows from the constructions of Shelah in~\cite{Shelah}, where he exhibits a model of ZF$+$DC in which all subsets of $\mathbb{R}$ have the property of Baire. Notice that the $\aleph_0$-monoidal network constructed in Proposition~\ref{prop:12} may very well be a full monoidal network, though we have not proven this. The obstruction is purely set-theoretic: we cannot necessarily assemble the bijections $\gamma_z$ as we did above if the set $\mathcal{A}$ is uncountable, at least not under the set-theoretic hypotheses that apply here. Finding a way to circumvent this difficulty would prove that Proposition~\ref{prop:10} is not a theorem of ZF. 
\\
\\
\noindent \textbf{\emph{Acknowledgments.}} The author would like to thank Ashwin Trisal for many constructive comments and discussions related to this topic. 

\end{document}